\newtheorem{theorem}{Theorem}
\newtheorem{lemma}{Lemma}
\newcommand{\f}{\left}
\newcommand{\ri}{\right}
\begin{document}		
	\begin{frontmatter}
	\title{Extremal functions for a class of trace Trudinger-Moser inequalities}
	\author{Mengjie Zhang}
	\ead{zhangmengjie@ruc.edu.cn}
	\address{School of Mathematics, Renmin University of China, Beijing 100872, P.R.China}		
	\begin{abstract}
In this paper, we concern trace Trudinger-Moser inequalities on a compact Riemann surface with smooth boundary.
This kind of inequalities were extensively studied by Osgood-Phillips-Sarnak \cite{OPS}, Liu \cite{Liu}, Li-Liu \cite{Li-Liu}, Yang \cite{Yang2006, Yang2007} and others.
We establish several trace Trudinger-Moser inequalities and obtain the corresponding extremals via the method of blow-up analysis.
The results in the current paper generalize those of Li-Liu \cite{Li-Liu} and Yang \cite{Yang2007, Yang-JDE-15}.

\end{abstract}
		
	\begin{keyword}
	Trudinger-Moser inequality, Riemann surface, blow-up analysis, extremal function.\\
    2010 MSC: 46E35; 58J05.
	\end{keyword}
		
	\end{frontmatter}

\section{Introduction and main results}

Let $\Omega\subseteq \mathbb{R}^2$ be a smooth bounded domain and $W_0^{1,2}(\Omega)$ be the completion of $C_0^{\infty}(\Omega)$ under the Sobolev norm
$\|\nabla_{\mathbb{R}^2} u\|_2^2= \int_{\Omega}{|\nabla_{\mathbb{R}^2} u|^2}dx,$
where $\nabla_{\mathbb{R}^2}$ is the gradient operator on ${\mathbb{R}^2}$ and $\|\cdot\|_2$ denotes the standard $L^2$-norm.
The classical Trudinger-Moser inequality \cite{Yudovich, Pohozaev, Peetre, Trudinger1967, Moser1970}, as the limit case of the Sobolev embedding, says
    \begin{equation}\label{Trudinger-Moser}
    \sup_{u\in W_0^{1, 2}(\Omega), \, \|\nabla_{\mathbb{R}^2} u\|_2\leq 1}
    \int_\Omega e^{\ \beta u^2}dx<+\infty, \ \forall \ \beta\leq 4\pi.
    \end{equation}
Moreover, $4\pi$ is called the best
constant for this inequality in the sense that when $\beta> 4\pi$, all integrals in (\ref{Trudinger-Moser}) are still finite, but the supremum is infinite.
It is interesting to know whether or not the supremum in (\ref{Trudinger-Moser}) can be attained.
For this topic, we refer the reader to Carleson-Chang \cite{C-C}, Flucher \cite{Flucher}, Lin \cite{Lin}, Adimurthi-Struwe \cite{A-Struwe}, Li \cite{Li-JPDE,Li-Sci}, Yang \cite{Yang-JFA-06, Yang-IJM}, 
Zhu \cite{ZhuJY}, Tintarev \cite{Tintarev} and the references therein.

Trudinger-Moser inequalities were introduced on Riemannian manifolds by Aubin \cite{A}, Cherrier \cite{C}, Fontana \cite{Fontana} and others.
In particular, let $( \Sigma , g )$ be a compact Riemann surface with smooth boundary $\partial \Sigma$ and $W^{1,2} ( \Sigma, g )$ be the completion of $C ^ { \infty } ( \Sigma)$ under the norm
$$\|u \| ^2_ { W^{1,2}( \Sigma, g) } =  \int _ { \Sigma }\left( u^2+| \nabla_g u | ^2\right)\, dv_g,$$
where $\nabla_g$ stands for the gradient operator on $(\Sigma, g)$.
Liu \cite{Liu} derived a trace Trudinger-Moser inequality in his doctoral thesis from the result of Osgood-Phillips-Sarnak \cite{OPS}: for all functions $u \in W^{1,2} ( \Sigma,g )$, there holds
\begin{equation}\label{1.1}
\log \int _ { \partial \Sigma } e ^ { u } d s _ { g } \leq \frac { 1 } { 4 \pi } \int _ { \Sigma} | \nabla_g u | ^ { 2 } d v _ { g } + \int _ { \partial \Sigma} u d s _ { g } + C
\end{equation}
for some constant $C$ depending only on $ (\Sigma,\ g)$.
Later Li-Liu \cite{Li-Liu} obtained a strong version of (\ref{1.1}), namely
 \begin{equation}\label{1}
\sup _ {u \in W^{1,2} ( \Sigma, g ),\ \int _ { \Sigma } | \nabla_g u | ^ { 2 } d v _ { g } = 1 , \int _ { \partial \Sigma } u d s _ { g } = 0 } \int _ { \partial \Sigma } \mathrm { e } ^ { \gamma u ^ { 2 } } d s _ { g } < + \infty
\end{equation}
for any $\gamma\leq \pi $.
This inequality is sharp in the sense that all integrals
in (\ref{1}) are still finite when $\gamma > \pi$, but the supremum is infinite.
Moreover, for any $\gamma\leq \pi $, the supremum is attained.

Another form of (\ref{1}) was established by Yang \cite{Yang2006}, say
\begin{equation*}\label{1.3}
\sup _ { u \in W^{1,2} ( \Sigma, g ),\ \int _ { \Sigma} | \nabla_g u | ^ { 2 }  d v _ g = 1 , \ \int_{\Sigma}u d v_g=0} \int _ { \partial \Sigma } e ^ { \pi u ^ { 2 } } d s _ { g } < + \infty.
\end{equation*}
Also, an improvement of (\ref{1}) was obtained by Yang \cite{Yang2007} as follows:
\begin{equation}\label{1.4}
\sup _ { u \in W^{1,2}(\Sigma, g),\  \int _ { \Sigma} | \nabla_g u | ^ { 2 } d v _ g = 1,\ \int_{\partial \Sigma} u d s_{g}=0 } \int _ { \partial \Sigma } \mathrm { e } ^ { \pi u ^ { 2 } \left( 1 +\alpha \|u\|^2_{L^2(\partial\Sigma)}  \right) }ds _ { g }<+\infty
\end{equation}
for all $0 \leq \alpha<\lambda_1(\partial \Sigma)$, where
\begin{equation}\label{la}
\lambda_1(\partial\Sigma)= \inf _ { u \in W^{1,2} ( \Sigma, g) , \int _ { \partial\Sigma } u ds_g = 0 , u \not\equiv 0 } \frac {\int _{\Sigma}  |\nabla_g u|^2 dv_g} {\int _{\partial\Sigma}  {u^2}\,ds_g  }
\end{equation}
is the first eigenvalue of the Laplace-Beltrami operator $\Delta _ { g}$ on the boundary $\partial \Sigma$.
 This inequality is sharp in the sense that all integrals
in (\ref{1.4}) are still finite when $ \alpha\geq\lambda_1(\partial \Sigma)$, but the supremum is infinite. Moreover, for sufficiently
small $\alpha>0$, the supremum in (\ref{1.4}) can be attained.

A different form of (\ref{1.4}) was also derived by Yang \cite{Yang-JDE-15}, namely
\begin{equation}\label{1.2}
\sup _{u \in W^{1,2}(\Sigma, g),\ \int_{\Sigma}(|\nabla_{g} u|^{2} -\alpha  u^{2} )d v_{g} \leq 1,\ \int_{\Sigma} u d v_{g}=0} \int_{\Sigma} e^{4 \pi u^{2}} d v_{g}<+\infty
\end{equation}
for all $0 \leq \alpha<\lambda_{1}(\Sigma)$, where
\begin{equation*}
\lambda_{1}(\Sigma)=\inf _{u \in W^{1,2}(\Sigma, g), \int _{\Sigma} u d v_{g}=0, u \not\equiv 0} \frac{\int_{\Sigma}|\nabla_{g} u|^{2} d v_{g}}{\int_{\Sigma} u^{2} d v_{g}}.
\end{equation*}
Further, he extended (\ref{1.2}) to the case of higher order eigenvalues.
Denote
\begin{equation*}
E^{\perp}=\left\{u \in W^{1,2}(\Sigma, g): \int_{\partial\Sigma} u v d s_{g}=0, \forall v \in E \right\},
\end{equation*}
where $E\subset W^{1,2}(\Sigma, g)$ is a function space.
For any positive integer $\ell$, we set
$$E_{\lambda_{l}(\Sigma)}=\left\{ u\in W^{1,2}(\Sigma, g): \Delta _ { g}=\lambda_l(\Sigma)u\right\},$$
\begin{equation*}
\lambda_{\ell+1}(\Sigma)=\inf _{u \in E_{\lambda _\ell(\Sigma)}^{\perp}, \int _{\Sigma} u d v_{g}=0, u \not\equiv 0} \frac{\int_{\Sigma}|\nabla_{g} u|^{2} d v_{g}}{\int_{\Sigma} u^{2} d v_{g}}
\end{equation*} 
and
$$E_{\ell}( \Sigma)=E_{\lambda_{1}(  \Sigma)} \oplus E_{\lambda_{2}( \Sigma)} \oplus \cdots \oplus E_{\lambda_{\ell}( \Sigma)}.$$
Then the supremum
\begin{equation}\label{1.5}
\sup _{u \in  E_{\ell}^{\perp}(\Sigma),\ \int_{\Sigma}(|\nabla_{g} u|^{2} -\alpha  u^{2}) d v_{g} \leq 1,\ \int_{\Sigma} u d v_{g}=0} \int_{\Sigma} e^{4 \pi u^{2}} d v_{g}<+\infty
\end{equation}
for all $0 \leq \alpha<\lambda_{\ell+1}(\Sigma)$; moreover the above supremum can be attained by some function $u_\alpha\in E_{\ell}^{\perp}(\Sigma)$.

In this paper, we consider trace Trudinger-Moser inequalities like (\ref{1.2}) and (\ref{1.5}).
Precisely we first have the following:
\begin{theorem}\label{T1}
Let $( \Sigma , g )$ be a compact Riemannian surface with smooth boundary $\partial\Sigma$ and $\lambda_1(\partial\Sigma)$
be defined by (\ref{la}).
For any $0\leq\alpha<\lambda_1(\partial\Sigma)$, we let
\begin{equation*}\label{H}
\mathcal { H } = \left\{ u \in W^{1,2} ( \Sigma, g) : \|u\|_{1, \alpha}\leq 1 \ \, \mathrm{and}\, \int _{\partial\Sigma}  {u}\,ds_g = 0 \right\},
\end{equation*}
where
\begin{equation}\label{1aa}
\|u\|_{1, \alpha}=\left(\int _{\Sigma}  |\nabla_g u|^2 dv_g -\alpha \int _{\partial\Sigma}  {u^2}ds_g \right)^{1/2}.
\end{equation}
Then the supremum
\begin{equation}\label{sup2}
\sup_{ u \in \mathcal { H } }\int _ { \partial\Sigma } e ^ {\pi u^ 2} ds_g
\end{equation}
is attained by some function $u_\alpha \in \mathcal{H}\cap C^{\infty} (\overline{ \Sigma})$.
     \end{theorem}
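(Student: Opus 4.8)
The plan is to adapt the blow-up analysis of Li--Liu \cite{Li-Liu} and Yang \cite{Yang2007,Yang-JDE-15} to the perturbed Dirichlet energy $\|\cdot\|_{1,\alpha}^2$ introduced in (\ref{1aa}). The hypothesis $0\le\alpha<\lambda_1(\partial\Sigma)$ is used first to observe that on the subspace $\{u\in W^{1,2}(\Sigma,g):\int_{\partial\Sigma}u\,ds_g=0\}$ the quantity $\|u\|_{1,\alpha}$ is a norm equivalent to $\|\nabla_g u\|_2$, so that $\mathcal H$ is bounded in $W^{1,2}(\Sigma,g)$ and, by the trace inequality (\ref{1.1}), the functional $J(u):=\int_{\partial\Sigma}e^{\pi u^2}\,ds_g$ is finite on $\mathcal H$; write $\Lambda_\pi=\sup_{u\in\mathcal H}J(u)$. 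For each $\epsilon\in(0,\pi)$ the subcritical functional $u\mapsto\int_{\partial\Sigma}e^{(\pi-\epsilon)u^2}\,ds_g$ is weakly continuous on $\mathcal H$ --- this combines the compact trace embeddings $W^{1,2}(\Sigma,g)\hookrightarrow L^q(\partial\Sigma)$, $q\ge 1$, with the uniform higher integrability supplied by (\ref{1.1}) at exponents slightly below $\pi$ --- hence it attains its maximum $\Lambda_\epsilon$ at some $u_\epsilon\in\mathcal H$, and a scaling argument forces $\|u_\epsilon\|_{1,\alpha}=1$. Computing the Euler--Lagrange equation with Lagrange multipliers for the two constraints gives
\[
\begin{cases}
\Delta_g u_\epsilon=0 & \text{in }\Sigma,\\
\partial_\nu u_\epsilon=\alpha\,u_\epsilon+\lambda_\epsilon^{-1}\big((\pi-\epsilon)\,u_\epsilon\,e^{(\pi-\epsilon)u_\epsilon^2}-\mu_\epsilon\big) & \text{on }\partial\Sigma,
\end{cases}
\]
where $\nu$ is the outward unit normal, $\lambda_\epsilon=(\pi-\epsilon)\int_{\partial\Sigma}u_\epsilon^2e^{(\pi-\epsilon)u_\epsilon^2}\,ds_g$, and $\mu_\epsilon$ is the constant making the Neumann datum have zero mean.

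Next set $c_\epsilon=\max_{\overline\Sigma}|u_\epsilon|$, attained at a boundary point $x_\epsilon$ by the maximum principle. If $c_\epsilon$ remains bounded along some sequence $\epsilon\to 0$, then elliptic estimates applied to the above boundary value problem give $u_\epsilon\to u_\alpha$ in $C^1(\overline\Sigma)$ with $u_\alpha\in\mathcal H$ and $J(u_\alpha)=\Lambda_\pi$, and bootstrapping the (now regular) boundary equation yields $u_\alpha\in C^\infty(\overline\Sigma)$; the theorem follows. It therefore suffices to exclude the blow-up case $c_\epsilon\to\infty$. Assume it occurs, $x_\epsilon\to p\in\partial\Sigma$. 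In isothermal coordinates sending a neighborhood of $p$ to a half-disk, one carries out the blow-up at the scale $r_\epsilon$ defined by $r_\epsilon^{-2}=\lambda_\epsilon^{-1}(\pi-\epsilon)c_\epsilon^2e^{(\pi-\epsilon)c_\epsilon^2}$: the normalized functions $u_\epsilon(x_\epsilon+r_\epsilon\,\cdot\,)/c_\epsilon$ tend to $1$ locally, while $c_\epsilon\big(u_\epsilon(x_\epsilon+r_\epsilon\,\cdot\,)-c_\epsilon\big)$ converges in $C^1_{\mathrm{loc}}(\overline{\mathbb R^2_+})$ to the standard bubble, i.e.\ the entire solution of the half-plane Liouville equation with finite boundary mass. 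Away from $p$ one shows $c_\epsilon u_\epsilon\rightharpoonup G$ (weakly in $W^{1,q}$ for $q<2$, and in $C^1_{\mathrm{loc}}(\overline\Sigma\setminus\{p\})$), where $G$ solves $\Delta_g G=0$ in $\Sigma$, $\partial_\nu G=\alpha G+\delta_p-|\partial\Sigma|_g^{-1}$ on $\partial\Sigma$, and $\int_{\partial\Sigma}G\,ds_g=0$, with the local expansion $G(x)=-\tfrac1\pi\log\mathrm{dist}_g(x,p)+A_p+o(1)$ near $p$; here $A_p=A_p(\alpha)$ is a Robin-type constant. A Pohozaev identity on small half-balls about $p$, together with a capacity estimate on the neck region, then yields the upper bound
\[
\Lambda_\pi\ \le\ |\partial\Sigma|_g+\pi\,e^{\,1+\pi A_p},
\]
the precise value of the last constant being read off from the mass of the bubble.

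To rule out blow-up, construct Moser-type test functions $\phi_\epsilon\in\mathcal H$: a truncated logarithm concentrated at $p$ at scale $\epsilon$, glued to $c+G/c$ outside a fixed ball, with $c=c(\epsilon)$ chosen so that $\|\phi_\epsilon\|_{1,\alpha}=1$. A careful expansion of $\int_{\partial\Sigma}e^{\pi\phi_\epsilon^2}\,ds_g$ shows that it exceeds $|\partial\Sigma|_g+\pi e^{1+\pi A_p}$ for all sufficiently small $\epsilon$, contradicting the previous bound. Hence $c_\epsilon$ stays bounded, and the extremal $u_\alpha$ is obtained as in the first alternative.

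I expect the main obstacles to be two. First, carrying the extra term $-\alpha\int_{\partial\Sigma}u^2$ through the argument: it is a compact, lower-order perturbation precisely because $\alpha<\lambda_1(\partial\Sigma)$, but it enters the limiting equation for $G$ and thereby modifies the Robin constant $A_p$, so it must be tracked carefully in both the Pohozaev computation and the asymptotics of $G$ near $p$. Second, and more seriously, matching the two one-term asymptotic expansions --- the upper bound for $\Lambda_\pi$ under blow-up and the lower bound coming from the test functions --- so as to pin down the threshold $\pi e^{1+\pi A_p}$ exactly; this is where essentially all of the technical work resides. Everything else follows the by-now-standard pattern of \cite{Li-Liu,Yang2007,Yang-JDE-15}.
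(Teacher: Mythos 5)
Your outline reproduces the paper's proof architecture step for step: subcritical maximizers $u_\epsilon$ and their Euler--Lagrange equation, blow-up at a boundary point $p$ producing the Li--Zhu half-plane bubble and a Green function $G=-\frac{1}{\pi}\log r+A_p+O(r)$ for the Robin-type problem, a capacity estimate bounding the supremum under blow-up, and a Moser-type test function that violates that bound. Two of your formulas, however, are the interior ($dv_g$) normalizations rather than the trace ($ds_g$) ones, and as written those steps fail. First, the blow-up scale must be $r_\epsilon=\lambda_\epsilon c_\epsilon^{-2}e^{-(\pi-\epsilon)c_\epsilon^2}$ to the \emph{first} power, because the concentration lives on the one-dimensional boundary; with your $r_\epsilon^{-2}=\lambda_\epsilon^{-1}(\pi-\epsilon)c_\epsilon^{2}e^{(\pi-\epsilon)c_\epsilon^{2}}$ the coefficient $r_\epsilon c_\epsilon^{2}\lambda_\epsilon^{-1}e^{(\pi-\epsilon)c_\epsilon^{2}}$ appearing in the rescaled Neumann condition diverges instead of tending to $1$, so $c_\epsilon\bigl(u_\epsilon(x_\epsilon+r_\epsilon\,\cdot\,)-c_\epsilon\bigr)$ does not converge to the Liouville bubble. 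Second, the threshold is $\ell(\partial\Sigma)+2\pi e^{\pi A_p}$, not $\ell(\partial\Sigma)+\pi e^{1+\pi A_p}$: the extra factor $e$ belongs to the two-dimensional Carleson--Chang computation and does not occur in the trace case, where $\int_{\partial\mathbb{R}^2_+}e^{2\pi\varphi}\,dx_1=1$ and the capacity argument yields $\limsup_{\epsilon\to0}\lambda_\epsilon/c_\epsilon^{2}\le 2\pi e^{\pi A_p}$. Since the entire contradiction rests on matching the test-function expansion against this threshold, the constant must be the correct one; the paper's test function gives $\ell(\partial\Sigma)+2\pi e^{\pi A_p}+\pi\|G\|_{L^2(\partial\Sigma)}^{2}/c^{2}+o(c^{-2})$, and it is the positive $\|G\|^2$ term that wins. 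With these corrections (and minor ones: the paper obtains the upper bound from the capacity estimate alone, without a Pohozaev identity, and the test function is glued to $G/c$, not $c+G/c$, outside the concentration ball), your proposal coincides with the paper's argument.
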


Moreover, we extend Theorem \ref{T1} to the case of higher order eigenvalues.
Let us introduce some notations.
For any positive integer $\ell$, we set
 $$E_{\lambda_{l}(\partial\Sigma)}=\left\{ u \in W^{1,2} ( \Sigma, g ) : \Delta _ { g } u= 0   \text { in }  (\Sigma,\ g) \text{ and }  \frac{ \partial u } { \partial {\mathbf{n}}}=\lambda _ { l} ( \partial \Sigma )u \text { on }  \partial \Sigma \right\},$$
 where $\mathbf{n}$ denotes the outward unit normal vector on $\partial \Sigma$,
\begin{equation}\label{lal}
\lambda_{\ell+1}(\partial\Sigma)= \inf _ { u \in E_{\lambda_\ell(\partial\Sigma)}^{\perp}, \int _ { \partial\Sigma } u ds_g = 0 , u \not\equiv 0 } \frac {\int _{\Sigma}  |\nabla_g u|^2 dv_g} {\int _{\partial\Sigma}  {u^2}\,ds_g  }
\end{equation}
and
\begin{equation}\label{l1}
  E_{\ell}( \partial\Sigma)=E_{\lambda_{1}(  \partial\Sigma)} \oplus E_{\lambda_{2}( \partial\Sigma)} \oplus \cdots \oplus E_{\lambda_{\ell}( \partial\Sigma)}.
\end{equation}
\begin{theorem}\label{2T1}
Let $( \Sigma , g )$ be a compact Riemannian surface with smooth boundary $\partial\Sigma$, $\ell$ be an positive integer and $\lambda_{\ell+1}(\partial\Sigma)$ be defined by (\ref{lal}).
For any $0\leq \alpha < \lambda _ { \ell + 1 } ( \partial \Sigma ) ,$
we let
\begin{equation}\label{S}
\mathcal { S } = \left\{ u \in E _ { \ell }^ { \perp } ( \partial \Sigma ) : \|u\|_{1, \alpha}\leq 1 \ \, \mathrm{and}\, \int _{\partial\Sigma}  {u}\,ds_g = 0 \right\},
\end{equation}
where $\|u\|_{1, \alpha}$ is defined as in (\ref{1aa}).
Then the supremum
\begin{equation*}
\sup_{ u \in \mathcal { S} }\int _ { \partial\Sigma } e ^ {\pi u^ 2} ds_g
\end{equation*}
is attained by some function $u_\alpha \in \mathcal{S}\cap C^{\infty} (\overline{ \Sigma})$.
     \end{theorem}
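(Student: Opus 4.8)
\emph{Setup, subcritical maximizers and the Euler--Lagrange equation.} The proof follows the classical blow-up scheme for Trudinger--Moser functionals in the boundary-integral form of Theorem~\ref{T1}; the genuinely new feature is the finite-codimension constraint $u\in E_\ell^\perp(\partial\Sigma)$, so I shall emphasize the modifications it forces. Fix an $L^2(\partial\Sigma)$-orthonormal basis $\{e_1,\dots,e_m\}$ of $E_\ell(\partial\Sigma)$, where $m=\dim E_\ell(\partial\Sigma)$. On $E_\ell^\perp(\partial\Sigma)\cap\{u:\int_{\partial\Sigma}u\,ds_g=0\}$ one has $\int_\Sigma|\nabla_g u|^2dv_g\ge\lambda_{\ell+1}(\partial\Sigma)\int_{\partial\Sigma}u^2\,ds_g$ by (\ref{lal}), so for $0\le\alpha<\lambda_{\ell+1}(\partial\Sigma)$ the quantity $\|\cdot\|_{1,\alpha}$ of (\ref{1aa}) is there a norm equivalent to $(\int_\Sigma|\nabla_g u|^2dv_g)^{1/2}$; combining this eigenvalue bound with a concentration-compactness argument on $\partial\Sigma$ and the inequality (\ref{1}) yields the finiteness of the supremum in Theorem~\ref{2T1}, the trace counterpart of the inequality underlying (\ref{1.5}). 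Next, for each $\epsilon\in(0,\pi)$, using the compactness of the trace embedding $W^{1,2}(\Sigma,g)\hookrightarrow L^q(\partial\Sigma)$ ($q\ge1$) and this finiteness, the subcritical supremum $\sup_{u\in\mathcal S}\int_{\partial\Sigma}e^{(\pi-\epsilon)u^2}ds_g$ is attained by some $u_\epsilon\in\mathcal S$, which we normalize so that $c_\epsilon:=\max_{\overline\Sigma}|u_\epsilon|=u_\epsilon(x_\epsilon)$. Its Euler--Lagrange equation — with Lagrange multipliers for $\|u\|_{1,\alpha}\le1$, for $\int_{\partial\Sigma}u\,ds_g=0$ and for the $m$ orthogonality conditions, and testing first against functions vanishing on $\partial\Sigma$ — shows that $u_\epsilon$ is harmonic in $\Sigma$ and satisfies on $\partial\Sigma$
$$\frac{\partial u_\epsilon}{\partial\mathbf{n}}=\frac1{\lambda_\epsilon}u_\epsilon e^{(\pi-\epsilon)u_\epsilon^2}+\alpha u_\epsilon-\frac{\mu_\epsilon}{2(\pi-\epsilon)\lambda_\epsilon}-\frac1{2(\pi-\epsilon)\lambda_\epsilon}\sum_{j=1}^m\tau_{\epsilon,j}e_j,$$
where $\lambda_\epsilon=\int_{\partial\Sigma}u_\epsilon^2e^{(\pi-\epsilon)u_\epsilon^2}ds_g$ (obtained by testing with $u_\epsilon$ and using the constraints). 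By elliptic regularity $u_\epsilon\in C^\infty(\overline\Sigma)$, and since $u_\epsilon$ is harmonic, $|u_\epsilon|$ attains $c_\epsilon$ on $\partial\Sigma$, so we may take $x_\epsilon\in\partial\Sigma$.

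\emph{Dichotomy and blow-up.} If $\limsup_{\epsilon\to0}c_\epsilon<\infty$, then $\lambda_\epsilon$ and the multipliers stay bounded away from degeneracy, and elliptic estimates give $u_\epsilon\to u_\alpha$ in $C^1(\overline\Sigma)$ along a subsequence, with $u_\alpha\in\mathcal S$ realizing the critical supremum; this proves Theorem~\ref{2T1}. Suppose instead $c_\epsilon\to\infty$. Then one shows: $x_\epsilon\to p\in\partial\Sigma$; the probability measures $\lambda_\epsilon^{-1}u_\epsilon^2e^{(\pi-\epsilon)u_\epsilon^2}ds_g$ concentrate at $p$; and $c_\epsilon u_\epsilon\rightharpoonup G_p$ weakly in $W^{1,q}(\Sigma)$ for $q\in(1,2)$ and in $C^1_{\mathrm{loc}}(\overline\Sigma\setminus\{p\})$, where $G_p\in E_\ell^\perp(\partial\Sigma)$ is harmonic in $\Sigma$, has $\int_{\partial\Sigma}G_p\,ds_g=0$, satisfies on $\partial\Sigma$ a Robin condition of the form $\partial G_p/\partial\mathbf{n}=\alpha G_p+\delta_p+\sum_j\sigma_j e_j$ (with the normalization forcing the expansion below), and near $p$, in a boundary isothermal coordinate, $G_p=-\tfrac1\pi\log|x-p|+A_p+o(1)$. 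Rescaling around $x_\epsilon$ at a scale $r_\epsilon\to0$ with $r_\epsilon\asymp\lambda_\epsilon c_\epsilon^{-1}e^{-(\pi-\epsilon)c_\epsilon^2}$ in the chart, the functions $c_\epsilon(u_\epsilon(x_\epsilon+r_\epsilon\,\cdot\,)-c_\epsilon)$ converge in $C^1_{\mathrm{loc}}$ to the standard bubble of the limiting half-plane problem — a function harmonic in $\mathbb R^2_+$ with an exponential Neumann condition on $\partial\mathbb R^2_+$, whose boundary trace is classified by Li--Zhu — and the Carleson--Chang expansion $u_\epsilon^2=c_\epsilon^2+2c_\epsilon(u_\epsilon-c_\epsilon)+(u_\epsilon-c_\epsilon)^2$ near $p$ feeds a lower bound for the concentrated part of $\int_{\partial\Sigma}e^{\pi u_\epsilon^2}ds_g$.

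\emph{The sharp upper bound (main obstacle).} The decisive step is to upgrade this, under the blow-up assumption, to
$$\sup_{u\in\mathcal S}\int_{\partial\Sigma}e^{\pi u^2}ds_g\ \le\ |\partial\Sigma|+\pi\,e^{\,1+2\pi A_p},$$
via a Pohozaev-type identity on half-discs centred at $p$ together with a careful estimate of the neck region between the bubble scale $r_\epsilon$ and a fixed scale $\delta$, showing that no energy escapes there. This is where I expect the real work. The additional Robin term $\alpha u_\epsilon$ and, above all, the finite-rank term $\sum_j\tau_{\epsilon,j}e_j$ enter the Pohozaev identity and alter the boundary condition of $G_p$, hence the value of $A_p$; one must control these precisely — in particular show that $\lambda_\epsilon/c_\epsilon^2$ tends to a positive finite limit and that the rescaled multipliers converge — so that the constant $\pi e^{1+2\pi A_p}$ is not corrupted, while the hypotheses $\alpha<\lambda_{\ell+1}(\partial\Sigma)$ and $u_\epsilon\in E_\ell^\perp(\partial\Sigma)$ are exactly what keeps $G_p$ well defined and $A_p$ finite.

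\emph{Test functions and conclusion.} To contradict the last bound one constructs, for small $\epsilon>0$, a function $\phi_\epsilon\in\mathcal S$ of Carleson--Chang--Moser type: on a boundary isothermal half-disc of radius $\sim\epsilon$ about $p$, a truncated logarithmic bubble matched to $c(\epsilon)\,G_p$ along $|x-p|=\epsilon$; outside, $\phi_\epsilon=c(\epsilon)(G_p+\text{correction})$; and $c(\epsilon)$ chosen so that $\|\phi_\epsilon\|_{1,\alpha}=1$. The correction is a term of size $o(1)$ in $\|\cdot\|_{1,\alpha}$ supported near $p$ together with an adjustment in $\langle1\rangle\oplus E_\ell(\partial\Sigma)$ enforcing $\int_{\partial\Sigma}\phi_\epsilon\,ds_g=0$ and $\phi_\epsilon\perp_{L^2(\partial\Sigma)}E_\ell(\partial\Sigma)$; this adjustment is $o(1)$ in norm precisely because the bubble is localized away from the smooth functions $1,e_1,\dots,e_m$. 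Computing $\int_{\partial\Sigma}e^{\pi\phi_\epsilon^2}ds_g$ by splitting into the half-disc and its complement and invoking the definition of $A_p$ gives, for $\epsilon$ small, $\int_{\partial\Sigma}e^{\pi\phi_\epsilon^2}ds_g>|\partial\Sigma|+\pi e^{1+2\pi A_p}$, contradicting the bound of the previous step. Hence $c_\epsilon$ must remain bounded, so by the dichotomy the supremum in Theorem~\ref{2T1} is attained by some $u_\alpha\in\mathcal S$, and elliptic regularity applied to its Euler--Lagrange equation yields $u_\alpha\in C^\infty(\overline\Sigma)$.
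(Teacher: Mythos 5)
Your overall architecture coincides with the paper's: subcritical maximizers $u_\epsilon\in\mathcal S$ whose Euler--Lagrange equation carries the extra finite-rank multipliers $\sum_i\beta_{\epsilon,i}e_i/\lambda_\epsilon$, blow-up at a boundary point $p$, convergence of $c_\epsilon u_\epsilon$ to a Green function whose Robin condition is perturbed by $-\sum_i e_i e_i(p)$, a sharp upper bound under the blow-up assumption, and a concentrating test function that must be projected back into $E_\ell^\perp(\partial\Sigma)$ and renormalized. The one methodological divergence is in the upper bound: the paper runs a capacity estimate on the half-annulus $\mathbb B^+_\delta\setminus\mathbb B^+_{Rr_\epsilon}$ (as in Li and Yang) rather than your Pohozaev-plus-neck analysis; both are standard, but the capacity route is what directly yields $\limsup_{\epsilon\to0}\lambda_\epsilon/c_\epsilon^2\le 2\pi e^{\pi A_p}$, which combined with Lemma \ref{L6} gives the bound (\ref{215}).

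There is, however, a concrete error that would derail the argument if carried through. Your claimed bound $|\partial\Sigma|+\pi e^{1+2\pi A_p}$ is the interior Carleson--Chang/Li constant transplanted to the trace setting; it is not the constant compatible with the normalization $G=-\frac1\pi\log r+A_p+O(r)$ and the half-plane bubble $\varphi(x)=-\frac1{2\pi}\log\bigl(\pi^2x_1^2+(1+\pi x_2)^2\bigr)$ with $\int_{\partial\mathbb R^2_+}e^{2\pi\varphi}dx_1=1$. The correct bound here is $\ell(\partial\Sigma)+2\pi e^{\pi A_p}$, and the standard test function concentrating at $p$ yields $\ell(\partial\Sigma)+2\pi e^{\pi A_p}$ plus a positive remainder of order $c^{-2}$ coming from $\pi\|G\|^2_{L^2(\partial\Sigma)}/c^2$. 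Since $2\pi e^{\pi A_p}$ exceeds $\pi e^{1+2\pi A_p}$ only when $\pi A_p<\log2-1$, your contradiction closes on paper only because the same incorrect constant appears on both sides; an honest computation of either side would expose the mismatch. Relatedly, the genuinely new difficulty of this theorem relative to Theorem \ref{T1} --- restoring membership in $E_\ell^\perp(\partial\Sigma)$ --- needs more than the ``$o(1)$ in norm'' you assert: the margin by which the test function beats the upper bound is only $O(c^{-2})=O(1/\lvert\log\epsilon\rvert)$, so the projection coefficients $\int_{\partial\Sigma}(v_\epsilon-\bar v_\epsilon)e_i\,ds_g$ and the induced change in $\|\cdot\|_{1,\alpha}$ must be shown to be $o(c^{-2})$; the paper obtains $O(R^{-2})$ by using that the limiting Green function already lies in $E_\ell^\perp(\partial\Sigma)$ and that the bubble lives on a boundary arc of length $O(R\epsilon)$.
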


Clearly Theorem \ref{T1} generalizes that of (\ref{1}) and Theorem \ref{2T1} extends (\ref{1.5}) to the trace Trudinger-Moser inequality.
For theirs proofs,
we employ the method of blow-up analysis, which was originally used by Carleson-Chang\cite{C-C},
Ding-Jost-Li-Wang \cite{DJLW}, Adimurthi-Struwe \cite{A-Struwe}, Li \cite{Li-JPDE}, Liu \cite{Liu}, Li-Liu \cite{Li-Liu}, and Yang \cite{Yang2006, Yang2007}.
This method is now standard, for related works, we refer the reader to
Adimurthi-Druet \cite{AD},
do \'O-de Souza \cite{do-de2014,do-de2016}, Nguyen \cite{N2017,N2018}, Li \cite{Lixiaomeng}, Li-Yang \cite{L-Y}, Zhu \cite{Zhu}, Fang-Zhang \cite{F-Z} and Yang-Zhu \cite{Yang-Zhu2018,Yang-Zhu2019}.

In the remaining part of this paper, we prove Theorems \ref{T1} in Section 2 and Theorem \ref{2T1} in Section 3 respectively.

\section{The first eigenvalue case}
In this section, we will prove Theorem \ref{T1} in four steps:
firstly, we consider the existence of maximizers for subcritical functionals and give the corresponding Euler-Lagrange equation;
secondly, we deal with the asymptotic behavior of the maximizers through blow-up analysis;
thirdly, we deduce an upper bound of the supremum $\sup_{ u \in \mathcal { H } }\int _ { \partial\Sigma } e ^ {\pi u^ 2} ds_g$ under the assumption that blow-up occurs;
finally, we construct a sequence of functions to show Theorem \ref{T1} holds.
Here and in the sequel, we do not distinguish sequence and subsequence.

	\subsection{Existence of maximizers for subcritical functionals.}
Let $0 \leq \alpha < \lambda_1(\partial\Sigma)$.
Analogous to (\cite{Yang2006}, Lemma 4.1) and (\cite{Yang2007}, Lemma 3.2), we have the following:

\begin{lemma}\label{L1}
For any $0<\epsilon <\pi$,
the supremum
$\sup_{ u \in \mathcal { H } }\int _ { \partial\Sigma } e ^ {(\pi-\epsilon) u^ 2} ds_g$ is attained by some function $u_\epsilon \in \mathcal{H}$.
\end{lemma}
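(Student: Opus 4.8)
The plan is to apply the direct method in the calculus of variations. Set $S_\epsilon := \sup_{u \in \mathcal{H}} \int_{\partial\Sigma} e^{(\pi-\epsilon)u^2}\,ds_g \in (0,+\infty]$ and pick a maximizing sequence $u_j \in \mathcal{H}$. First I would observe that $\{u_j\}$ is bounded in $W^{1,2}(\Sigma,g)$: from $\int_{\partial\Sigma} u_j\,ds_g = 0$ and the definition (\ref{la}) of $\lambda_1(\partial\Sigma)$ we get $\int_{\partial\Sigma} u_j^2\,ds_g \le \lambda_1(\partial\Sigma)^{-1}\int_\Sigma |\nabla_g u_j|^2\,dv_g$, so $\|u_j\|_{1,\alpha}^2 \le 1$ forces $\int_\Sigma |\nabla_g u_j|^2\,dv_g \le (1 - \alpha/\lambda_1(\partial\Sigma))^{-1}$, which is finite because $\alpha < \lambda_1(\partial\Sigma)$; a standard trace--Poincar\'e inequality then bounds $\|u_j\|_{L^2(\Sigma)}$ as well. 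Hence, up to a subsequence, $u_j \rightharpoonup u_\epsilon$ weakly in $W^{1,2}(\Sigma,g)$, $u_j \to u_\epsilon$ strongly in $L^2(\Sigma,g)$, and---by compactness of the trace embedding $W^{1,2}(\Sigma,g)\hookrightarrow L^2(\partial\Sigma)$---$u_j \to u_\epsilon$ strongly in $L^2(\partial\Sigma)$ and a.e.\ on $\partial\Sigma$.

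Next I would check $u_\epsilon \in \mathcal{H}$. The constraint $\int_{\partial\Sigma} u_\epsilon\,ds_g = 0$ passes to the limit by the strong $L^2(\partial\Sigma)$ convergence. For the norm constraint, lower semicontinuity of the Dirichlet energy gives $\int_\Sigma |\nabla_g u_\epsilon|^2\,dv_g \le \liminf_j \int_\Sigma |\nabla_g u_j|^2\,dv_g$, while $\int_{\partial\Sigma} u_j^2\,ds_g \to \int_{\partial\Sigma} u_\epsilon^2\,ds_g$; since $\alpha \ge 0$, these combine to $\|u_\epsilon\|_{1,\alpha}^2 \le \liminf_j \|u_j\|_{1,\alpha}^2 \le 1$, so $u_\epsilon \in \mathcal{H}$.

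The heart of the proof is to show $\int_{\partial\Sigma} e^{(\pi-\epsilon)u_j^2}\,ds_g \to \int_{\partial\Sigma} e^{(\pi-\epsilon)u_\epsilon^2}\,ds_g$; by the a.e.\ convergence already obtained and Vitali's theorem it suffices to produce a uniform bound for $\{e^{(\pi-\epsilon)u_j^2}\}$ in $L^p(\partial\Sigma)$ for some $p>1$. Write $w_j = u_j - u_\epsilon$; then $w_j \rightharpoonup 0$ in $W^{1,2}(\Sigma,g)$, $w_j \to 0$ in $L^2(\partial\Sigma)$, $\int_{\partial\Sigma} w_j\,ds_g = 0$, and, using weak convergence together with $\int_\Sigma|\nabla_g u_j|^2\,dv_g = \|u_j\|_{1,\alpha}^2 + \alpha\int_{\partial\Sigma}u_j^2\,ds_g \le 1 + \alpha\int_{\partial\Sigma}u_j^2\,ds_g$ and $\int_{\partial\Sigma}u_j^2\,ds_g\to\int_{\partial\Sigma}u_\epsilon^2\,ds_g$, one finds $\int_\Sigma |\nabla_g w_j|^2\,dv_g \le 1 - \|u_\epsilon\|_{1,\alpha}^2 + o(1) \le 1 + o(1)$. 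For $\delta>0$ Young's inequality gives $u_j^2 \le (1+\delta)w_j^2 + (1+\delta^{-1})u_\epsilon^2$, so for $p>1$ and a H\"older pair $r,r'>1$,
\[
\int_{\partial\Sigma} e^{p(\pi-\epsilon)u_j^2}\,ds_g \le \left(\int_{\partial\Sigma} e^{rp(\pi-\epsilon)(1+\delta)w_j^2}\,ds_g\right)^{1/r}\left(\int_{\partial\Sigma} e^{r'p(\pi-\epsilon)(1+\delta^{-1})u_\epsilon^2}\,ds_g\right)^{1/r'}.
\]
The second factor is a finite constant depending only on $u_\epsilon$ and the fixed parameters, since $e^{c u_\epsilon^2}\in L^1(\partial\Sigma)$ for every $c>0$, which follows by applying (\ref{1}) to the fixed function $u_\epsilon/\|\nabla_g u_\epsilon\|_{L^2(\Sigma)}$. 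For the first factor, normalize $\tilde w_j = w_j/\|\nabla_g w_j\|_{L^2(\Sigma)}$ (the case $\|\nabla_g w_j\|_{L^2(\Sigma)}=0$ forces $w_j\equiv0$ and is trivial), which satisfies $\int_\Sigma|\nabla_g \tilde w_j|^2\,dv_g = 1$ and $\int_{\partial\Sigma}\tilde w_j\,ds_g = 0$, so (\ref{1}) yields $\int_{\partial\Sigma} e^{\pi \tilde w_j^2}\,ds_g \le C_1$ with $C_1$ independent of $j$. Since $w_j^2 \le (1+o(1))\tilde w_j^2$, it is enough to choose $p>1$, $r>1$, $\delta>0$---all close to $1$, $1$, $0$, which is possible exactly because $\pi-\epsilon<\pi$---so that $rp(\pi-\epsilon)(1+\delta) < \pi$; then for $j$ large $rp(\pi-\epsilon)(1+\delta)(1+o(1)) \le \pi$ and the first factor is $\le C_1^{1/r}$. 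This gives the desired uniform $L^p(\partial\Sigma)$ bound, hence $\int_{\partial\Sigma} e^{(\pi-\epsilon)u_j^2}\,ds_g \to \int_{\partial\Sigma} e^{(\pi-\epsilon)u_\epsilon^2}\,ds_g$; in particular $S_\epsilon = \int_{\partial\Sigma} e^{(\pi-\epsilon)u_\epsilon^2}\,ds_g < +\infty$ and is attained at $u_\epsilon\in\mathcal{H}$.

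The only genuinely delicate point is this last equi-integrability step, where the strict subcriticality $\pi-\epsilon<\pi$ has to be converted into the room needed to absorb a H\"older exponent together with the $(1+\delta)$ and $(1+o(1))$ losses; everything else is soft functional analysis. The argument is modelled on (\cite{Yang2006}, Lemma 4.1) and (\cite{Yang2007}, Lemma 3.2), the only new feature being the boundary term $-\alpha\int_{\partial\Sigma}u^2\,ds_g$ in the norm (\ref{1aa}), which is handled throughout by the sign condition $\alpha\ge0$ and the gap $\alpha<\lambda_1(\partial\Sigma)$.
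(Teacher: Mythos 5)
Your argument is correct and is essentially the argument the paper itself relies on: the paper gives no proof of Lemma \ref{L1}, deferring to (\cite{Yang2006}, Lemma 4.1) and (\cite{Yang2007}, Lemma 3.2), whose proofs are exactly this direct method---coercivity of $\|\cdot\|_{1,\alpha}$ from $\alpha<\lambda_1(\partial\Sigma)$, weak lower semicontinuity, and the splitting $u_j^2\le(1+\delta)(u_j-u_\epsilon)^2+(1+\delta^{-1})u_\epsilon^2$ combined with $\limsup_j\|\nabla_g(u_j-u_\epsilon)\|_2^2\le 1-\|u_\epsilon\|_{1,\alpha}^2\le 1$ and the trace inequality (\ref{1}) to get equi-integrability in the subcritical regime. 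One small imprecision: the claim $e^{cu_\epsilon^2}\in L^1(\partial\Sigma)$ for \emph{every} $c>0$ does not follow from applying (\ref{1}) to $u_\epsilon/\|\nabla_g u_\epsilon\|_{L^2(\Sigma)}$ (that only covers $c\le\pi/\|\nabla_g u_\epsilon\|_{L^2(\Sigma)}^2$); the fact is true, but it needs the standard truncation argument writing $u_\epsilon=v_1+v_2$ with $v_1$ bounded and $\|\nabla_g v_2\|_2$ arbitrarily small.
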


Moreover, it is not difficult to check that $u_{\epsilon}$ satisfies the Euler-Lagrange equation
\begin{equation}\label{e-u}
	\left\{
\begin{aligned}
&\Delta_g u _ { \epsilon } = 0 \,\,\, \mathrm{in}\,\,\, \Sigma ,\\
&{ \frac { \partial u _ { \epsilon } } { \partial \mathbf{n} } = \frac { 1 } { \lambda _ { \epsilon } } u _ { \epsilon } e ^ {(\pi- \epsilon) u _ { \epsilon } ^ { 2 } } + \alpha u _ { \epsilon } - \frac { \mu _ { \epsilon } } { \lambda _ { \epsilon } } \,\,\, \mathrm{ on } \,\,\, \partial \Sigma },\\
&\lambda _ { \epsilon }= {\int _ { \partial \Sigma } u _ { \epsilon } ^ { 2 } \mathrm { e } ^ { (\pi- { \epsilon }) u _ { \epsilon } ^ { 2 } } ds _ { g }} ,\\
&\mu _ { \epsilon }= \frac {1} { \ell ( \partial \Sigma ) } \int _ { \partial \Sigma } u _ { \epsilon } \mathrm { e } ^ { (\pi-\epsilon) u _ { \epsilon } ^ { 2 } }ds _ { g },
 \end{aligned} \right.
	\end{equation}
where $\Delta_g$ denotes the Laplace-Beltrami operator, $\mathbf{n}$ denotes the outward unit normal vector on $\partial\Sigma$ and $\ell ( \partial \Sigma )$ denotes the length of $\partial \Sigma$.
Applying elliptic estimates and maximum principle to (\ref{e-u}) respectively, we have $u _ { \epsilon }\in \mathcal{H}\cap C^{\infty} (\overline{ \Sigma})$ and $u_\epsilon \not\equiv 0$ on $\partial\Sigma$.
The fact of $e^t\leqslant 1+te^t$ for any $t\geqslant 0 $ implies that
$\lambda _ { \epsilon }
\geq 1/{(\pi-\epsilon)} {\int _ { \partial\Sigma }\left( e ^ { (\pi-{ \epsilon }) u _ { \epsilon } ^ { 2 } }-1 \right)\ ds_g},$
which gives
\begin{equation}\label{L2}
  \liminf_{\epsilon \rightarrow 0} \ \lambda _{\epsilon}>0.
\end{equation}
From (\ref{L2}), one gets ${\mu _{\epsilon}}/{\lambda _{\epsilon}}$ is bounded with respect to $\epsilon$.
In addition, we have
    \begin{equation}\label{3}
    {\lim_{\epsilon \rightarrow 0}}\int _ { \partial\Sigma } e ^ {(\pi-\epsilon) u_{\epsilon}^ 2} ds_g=\sup_{ u \in \mathcal { H } }\int _ { \partial\Sigma } e ^ {\pi u^ 2} ds_g
    \end{equation}
from Lebesgue's dominated convergence theorem.

\subsection{Blow-up analysis}
We now perform the blow-up analysis. Let $c_{\epsilon}=|u_{\epsilon}( x_\epsilon )| ={\max}_{\overline{\Sigma}}|u_{\epsilon}|$.
With no loss of generality, we assume in the following $c_{\epsilon}=u_{\epsilon}( x_\epsilon )\rightarrow + \infty$ and $x_\epsilon\rightarrow p$ as $\epsilon\rightarrow0$.
Applying maximum principle to (\ref{e-u}), we have $p \in \partial \Sigma$.

\begin{lemma}\label{L3}
There hold $u_{\epsilon}\rightharpoonup 0$ weakly in $W^{1,2}( \Sigma , g)$ and $u_{\epsilon}\rightarrow 0$ strongly in $ L^2( \partial\Sigma, g)$ as $\epsilon\rightarrow0$.
Furthermore, $ |\nabla_g u_{\epsilon}|^2dv_g\rightharpoonup \delta _{p} $ in sense of measure, where $\delta _{p}$ is the usual Dirac measure centered at $p$.
\end{lemma}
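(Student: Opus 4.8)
The plan is to establish the three assertions in sequence, exploiting the structure of the Euler-Lagrange equation \eqref{e-u} together with the normalization $\|u_\epsilon\|_{1,\alpha}\le 1$ and the concentration of $u_\epsilon$ at $p$. First, since $\|u_\epsilon\|_{1,\alpha}\le 1$ and $0\le\alpha<\lambda_1(\partial\Sigma)$, the seminorm $\|\cdot\|_{1,\alpha}$ is equivalent (on the subspace $\{\int_{\partial\Sigma}u\,ds_g=0\}$) to $\|\nabla_g u\|_2$; indeed $\|u\|_{1,\alpha}^2\ge (1-\alpha/\lambda_1(\partial\Sigma))\|\nabla_g u\|_2^2$ by the definition \eqref{la} of $\lambda_1(\partial\Sigma)$. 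Hence $\{u_\epsilon\}$ is bounded in $W^{1,2}(\Sigma,g)$ (using also a Poincaré-trace inequality to control $\|u_\epsilon\|_2$), so up to a subsequence $u_\epsilon\rightharpoonup u_0$ weakly in $W^{1,2}(\Sigma,g)$, strongly in $L^2(\Sigma,g)$, and — by the compactness of the trace embedding $W^{1,2}(\Sigma,g)\hookrightarrow L^2(\partial\Sigma,g)$ — strongly in $L^2(\partial\Sigma,g)$, with $\int_{\partial\Sigma}u_0\,ds_g=0$.

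Next I would show $u_0\equiv 0$. The standard Carleson-Chang/Lions-type argument: if $u_0\not\equiv 0$ then $\liminf_\epsilon\|\nabla_g u_\epsilon\|_2^2\ge \|\nabla_g u_0\|_2^2 + \delta$ for some $\delta>0$ coming from the fact that $c_\epsilon\to\infty$ forces a nontrivial quantum of gradient energy to concentrate away from $u_0$; more precisely, writing $\|u_\epsilon\|_{1,\alpha}^2=\|u_0\|_{1,\alpha}^2+\|u_\epsilon-u_0\|_{1,\alpha}^2+o(1)$, one gets $\|u_\epsilon-u_0\|_{1,\alpha}^2\le 1-\|u_0\|_{1,\alpha}^2=:1-\gamma_0<1$ with $\gamma_0>0$. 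Then a concentration-compactness alternative for the trace Trudinger-Moser functional (the subcritical inequality \eqref{1} applied to $(u_\epsilon-u_0)/\|u_\epsilon-u_0\|_{1,\alpha}$ at exponent $\pi/(1-\gamma_0)>\pi$) shows $e^{(\pi-\epsilon)u_\epsilon^2}$ is bounded in $L^r(\partial\Sigma)$ for some $r>1$; by elliptic estimates applied to \eqref{e-u} this yields $u_\epsilon$ bounded in $C^1(\overline\Sigma)$, contradicting $c_\epsilon\to+\infty$. Therefore $u_0\equiv 0$, which gives the weak convergence to $0$ in $W^{1,2}$ and the strong convergence to $0$ in $L^2(\partial\Sigma,g)$.

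Finally, for the measure convergence $|\nabla_g u_\epsilon|^2\,dv_g\rightharpoonup\delta_p$: since $\|\nabla_g u_\epsilon\|_2^2=\|u_\epsilon\|_{1,\alpha}^2+\alpha\|u_\epsilon\|_{L^2(\partial\Sigma)}^2\to 1$ (the limit being exactly $1$, not merely $\le 1$, because if it were strictly less the argument of the previous paragraph would again bound $u_\epsilon$ and contradict blow-up), the measures $|\nabla_g u_\epsilon|^2\,dv_g$ have total mass tending to $1$ and, after passing to a subsequence, converge weakly-$*$ to some nonnegative measure $\mu$ with $\mu(\overline\Sigma)\le 1$. To see $\mu=\delta_p$ it suffices to show concentration at $p$: for any $\phi\in C^\infty(\overline\Sigma)$ supported away from $p$, one tests \eqref{e-u} against $\phi^2 u_\epsilon$ (or uses the now-standard argument that on any ball $B_\delta(q)$ with $q\ne p$ the energy $\int_{B_\delta(q)}|\nabla_g u_\epsilon|^2\,dv_g$ stays below the threshold $\pi/\pi=1$ for large $\epsilon$, so by the local version of \eqref{1} combined with elliptic regularity $u_\epsilon$ is bounded in $C^0_{loc}(\overline\Sigma\setminus\{p\})$, whence $u_\epsilon\to 0$ in $C^1_{loc}$ there), so $\mu$ is supported at $\{p\}$ and $\mu=c\,\delta_p$ with $c\le 1$; that $c=1$ follows once more from $\mu(\overline\Sigma)=\lim\|\nabla_g u_\epsilon\|_2^2=1$.

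The main obstacle is the second step — ruling out $u_0\not\equiv 0$ and, equivalently, establishing the strict energy gain $\|\nabla_g u_\epsilon\|_2^2\to 1$ under the blow-up hypothesis. This is where the precise sharp constant $\pi$ in the trace inequality \eqref{1} enters: one must leverage that $\|u_\epsilon-u_0\|_{1,\alpha}^2$ is bounded strictly below $1$ to push the effective exponent past $\pi$ and extract higher integrability of $e^{(\pi-\epsilon)u_\epsilon^2}$, which then feeds into the elliptic estimates for \eqref{e-u} to contradict $c_\epsilon\to+\infty$. The remaining pieces (boundedness in $W^{1,2}$, trace compactness, and localization of the energy measure) are routine.
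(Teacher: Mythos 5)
Your first two assertions are handled exactly as in the paper: extract the weak limit $u_0$ with strong trace convergence, and rule out $u_0\not\equiv 0$ by observing that $\|\nabla_g(u_\epsilon-u_0)\|_2^2$ tends to $1-\|u_0\|_{1,\alpha}^2<1$, then splitting $u_\epsilon^2\le(1+1/\delta)u_0^2+(1+\delta)(u_\epsilon-u_0)^2$, invoking H\"older and the sharp trace inequality (\ref{1}) to bound $e^{(\pi-\epsilon)u_\epsilon^2}$ in $L^q(\partial\Sigma)$, and contradicting $c_\epsilon\to+\infty$ via elliptic estimates for (\ref{e-u}). That part is correct and is the paper's argument.

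The third assertion, however, has a genuine gap as written. You assert that ``on any ball $B_\delta(q)$ with $q\ne p$ the energy $\int_{B_\delta(q)}|\nabla_g u_\epsilon|^2\,dv_g$ stays below the threshold $1$'' and from this conclude that $\mu$ is supported at $\{p\}$. But that assertion is precisely the content of the claim $\mu=\delta_p$ away from $p$: nothing rules out a priori that a second quantum of gradient energy concentrates at some $q\ne p$, and neither of your suggested justifications closes this. In particular, testing (\ref{e-u}) against $\phi^2u_\epsilon$ produces the boundary term $\lambda_\epsilon^{-1}\int_{\partial\Sigma}\phi^2u_\epsilon^2e^{(\pi-\epsilon)u_\epsilon^2}\,ds_g$, whose smallness on the support of $\phi$ is again equivalent to knowing where concentration occurs (it is essentially (\ref{10}), which is proved later). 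The paper runs the contradiction at $p$ instead: since $\mu(\overline\Sigma)\le\lim_{\epsilon\to0}\bigl(1+\alpha\|u_\epsilon\|_{L^2(\partial\Sigma)}^2\bigr)=1$, if $\mu\ne\delta_p$ then some half-ball $B_{r_0}(p)\cap\Sigma$ \emph{around the blow-up point itself} carries energy $\eta<1$; a cut-off function together with (\ref{1}) then bounds $e^{(\pi-\epsilon)u_\epsilon^2}$ in $L^q$ on $B_{r_0/2}(p)\cap\partial\Sigma$, and elliptic estimates bound $u_\epsilon$ near $p$, contradicting $c_\epsilon=u_\epsilon(x_\epsilon)\to+\infty$ with $x_\epsilon\to p$. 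The contradiction must be derived at $p$, where the blow-up hypothesis is available; your statement about balls away from $p$ is a consequence of $\mu=\delta_p$, not a step toward it. The remaining observations (total mass exactly $1$, hence the atom has full mass) are fine once this is repaired.
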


\begin{proof}
Since $u_{\epsilon}$ is bounded in $ W^{1,2}( \Sigma , g)$,
there exists some function $u_0$ such that $u_{\epsilon}\rightharpoonup u_0$ weakly in
$W^{1,2}( \Sigma , g)$ and $u _ { \epsilon } \rightarrow u _ { 0 }$ strongly in $L^2( \partial \Sigma,\, g )$ as $\epsilon\rightarrow0$.
Obviously, $\int _ { \partial \Sigma } u _ { 0 } \mathrm { d } s _ { g } = 0$ and $\|u_0\|_{1, \alpha}\leq1$.

Suppose $u_0 \not\equiv 0$, we can see that $\int_{\Sigma}{ |\nabla_g u_0|}^2 dv_g>0$ and
$$1\geq\|u_0\|_{1, \alpha}\geq\left(1-\frac{\alpha}{\lambda_1(\partial\Sigma)}\right)\int_{\Sigma}{ |\nabla_g u_0|}^2 dv_g>0.$$
 Hence $\lVert \nabla_g ( u_{\epsilon}-u_0 ) \rVert _{2}^{2}\rightarrow \zeta:=1-\|u_0\|_{1, \alpha}$ as $\epsilon \rightarrow 0$ and $0\leq \zeta<1$.
For sufficiently small $\epsilon$, one gets $\lVert \nabla_g ( u_{\epsilon}-u_0 ) \rVert _{2}^{2}\leqslant  (\zeta+1)/2<1$.
From the H\"older inequality, there holds
\begin{equation*}
\begin{aligned}
\int_{\partial \Sigma}{e^{q( \pi -\epsilon ) u_{\epsilon}^{2}}}ds_g
&\leq \int_{\partial \Sigma}{e^{q( \pi -\epsilon ) \f( 1+\frac{1}{\delta} \ri) u_{0}^{2}+q( \pi -\epsilon ) ( 1+\delta ) ( u_{\epsilon}-u_0 ) ^2}}ds_g
\\
&\leq \f( \int_{\partial \Sigma}{e^{rq( \pi -\epsilon ) \f( 1+\frac{1}{\delta} \ri) u_{0}^{2}}}ds_g \ri) ^{{1}/{r}}\f( \int_{\partial \Sigma}{e^{sq( \pi -\epsilon ) ( 1+\delta ) \frac{\zeta +1}{2} \frac{( u_{\epsilon}-u_0 ) ^2}{\lVert \nabla _g( u_{\epsilon}-u_0 ) \rVert _{2}^{2}}}}ds_g \ri) ^{{1}/{s}}
\end{aligned}
\end{equation*}
for some $q,\ r,\ s>1$ and $1/r+1/s=1$. By the inequality (\ref{1}), we get $e^{(\pi-\epsilon)u_{\epsilon}^{2}}$ is bounded in $L^q( \partial\Sigma, g ) $ for sufficiently small $\epsilon$.
Applying the elliptic estimate to (\ref{e-u}), one gets $u_{\epsilon}$ is uniformly bounded, which contradicts $c_{\epsilon}\rightarrow +\infty$. Therefore, the assumption is not established.

Suppose $|\nabla_g u_{\epsilon}|^2dv_g\rightharpoonup \mu \ne \delta_{p}$ in sense of measure.
There exists some $r_0>0$ such that $\lim_{\epsilon \rightarrow 0}\int_{\,{B_{r_0}(p)\cap\Sigma }}{| \nabla _gu_{\epsilon} |}^2dv_g=\eta <1$. We can see that $\int_{\,{{B_{r_0}(p)\cap\Sigma }}}{| \nabla _gu_{\epsilon} |}^2dv_g\leq(\eta +1)/{2}<1$ for sufficiently small $\epsilon$.
Then we choose a cut-off function $\rho$ in $C_{0}^{1}(\,{{B_{r_0}(p)\cap\Sigma }} \,)$, which is equal to $1$ in ${\overline{B_{r_0/2}(p)\cap\Sigma }}$  such that
$\int_{\,{B_{r_0}( p ) \cap \Sigma }}{|\nabla _g( \rho u_{\epsilon} ) |^2}dv_g
\leq (\eta +3)/{4}<1$ for sufficiently small $\epsilon$.
Hence we obtain
\begin{equation*}
\begin{aligned}
\int_{{B_{r_0/2}(p)}\cap\partial\Sigma }{e^{( \pi -\epsilon ) \,q\,u_{\epsilon}^{2}}}ds_g
&\leq \int_{B_{r_0}(p)\cap\partial\Sigma }{e^{( \pi -\epsilon ) \,q\,( \rho  u_{\epsilon} ) ^2}}ds_g\\
&\leq \int_{{B_{r_0}(p)}\cap\partial\Sigma }e^{( \pi -\epsilon ) \,q\,\frac{\eta +3}{4}\frac{( \rho {u}_{\epsilon} ) ^2}{\int_{\,{B_{r_0}( p ) \cap \Sigma }}{|\nabla _g( \rho u_{\epsilon} ) |^2}dv_g}}ds_g.
\end{aligned}
\end{equation*}
By the inequality (\ref{1}), $e^{(\pi-\epsilon)u_{\epsilon}^{2}}$ is bounded in $L^q( {B_{r_0/2}(p)\cap\partial\Sigma }, g) $ for some $q>1$.
Applying the elliptic estimate to (\ref{e-u}), we get $u_{\epsilon}$ is uniformly bounded in $ {B_{r_0/4}(p)\cap\partial\Sigma }$, which contradicts $c_{\epsilon} \rightarrow +\infty$. Therefore, Lemma \ref{L3} follows.
	\end{proof}

Now we analyse the asymptotic behavior of $u_\epsilon$ near the concentration point $p$.
Let
  \begin{equation}\label{r}
  r _ { \epsilon }  = \frac { \lambda _ { \epsilon } } {c _ { \epsilon } ^ { 2 }  e ^ {  (\pi- \epsilon ) c _ { \epsilon } ^ { 2 } }}.
  \end{equation}
We take an isothermal coordinate system $(U,\phi)$ near $p$ such that $\phi(p) = 0$, $\phi$ maps $U$ to $\mathbb { R } ^ { 2  }_+=\{x=(x_1,\ x_2)\in \mathbb { R } ^ 2:\ x_2>0\}$, and $\phi (U\cap \partial\Sigma)\subset \partial\mathbb { R } ^ { 2  }_+$. In such coordinates, the metric $g$ has the representation $g = e^{2f} (dx_1^2 +dx_2^2 )$ and $f$ is a smooth function with $f (0) = 0$.
Denote $\bar{u}_\epsilon=u_\epsilon\circ\phi^{-1}$, $\bar{x}_\epsilon=\phi(x_\epsilon)$ and $U _ { \epsilon } = \{ x \in \mathbb { R } ^ { 2 }: \bar{x} _ { \epsilon } + r _ { \epsilon } x \in \phi\,(U )\}$.
Define two blowing up functions in $ U _ { \epsilon }$
\begin{equation}\label{psi}
\psi _ { \epsilon } ( x ) = \frac{\bar{u}_\epsilon (\bar{ x} _ { \epsilon } + r _ { \epsilon } x )} {c _ { \epsilon }}
\end{equation}
and
\begin{equation}\label{phi}
\varphi _{\epsilon}( x ) =c_{\epsilon}\f( \bar{u}_{\epsilon}( \bar{x}_{\epsilon}+r_{\epsilon}x ) -c_{\epsilon} \ri).
\end{equation}
From (\ref{e-u}) and (\ref{r})-(\ref{phi}), a direct computation shows
	\begin{equation}\label{e-psi}
\left\{
\begin{aligned}
  & \Delta _{\mathbb{R}^2}\psi _{\epsilon}=0\,\,\,\mathrm{in}\,\,{ \mathbb { B } _ {R} ^ { + } ( 0 ) }\\
  &\frac { \partial \psi_{\epsilon } } { \partial \mathbf{v} } =
  -e^{f( \bar{x}_{\epsilon}+r_{\epsilon}x )}  \f( c_{\epsilon}^{-2}\psi _{\epsilon}e^{( \pi -\epsilon ) ( \psi _{\epsilon}+1 ) \varphi _{\epsilon}}+\alpha r_{\epsilon}\psi _{\epsilon}-\frac{r_{\epsilon}\mu _e}{c_{\epsilon}\lambda _{\epsilon}} \ri) \,\,\,\mathrm{on}\,\,{\partial \mathbb { R } ^ { 2  }_+ \cap\partial \mathbb { B } _ {R} ^ { + } ( 0 ) }
    \end{aligned}\right.
	\end{equation}
and
	\begin{equation}\label{e-phi}
\left\{
\begin{aligned}
&\Delta_{\mathbb{R}^2}\varphi _{\epsilon}=0\,\,\,\mathrm{in}\,\,{ \mathbb { B } _ {R} ^ { + } ( 0 ) }\\
&\frac { \partial  \varphi _ { \epsilon } } { \partial \mathbf{v} } =
-e^{f( \bar{x}_{\epsilon}+r_{\epsilon}x )}  \f( \psi _{\epsilon}e^{( \pi -\epsilon ) ( \psi _{\epsilon}+1 ) \varphi _{\epsilon}}+\alpha c_{\epsilon}^{2}r_{\epsilon}\psi _{\epsilon}-\frac{c_{\epsilon}r_{\epsilon}\mu _e}{\lambda _{\epsilon}} \ri) \,\,\,\mathrm{on}\,\,{\partial \mathbb { R } ^ { 2  }_+ \cap\partial \mathbb { B } _ {R} ^ { + } ( 0 ) },
    \end{aligned}\right.
	\end{equation}	
where $\Delta_ { \mathbb { R } ^ { 2 } }$ denotes the Laplace operator on $\mathbb { R } ^ { 2 }$ and $\mathbf{v}$ denotes the outward unit normal vector on $\partial{ \mathbb { R } ^ { 2 }_+ }$.
Noticing (\ref{e-psi}), (\ref{e-phi}) and using the same argument as (\cite{Yang2007}, Lemma 4.5 and Lemma 4.6), we obtain
	\begin{equation}\label{5}
    \lim_{ \epsilon\rightarrow0}\psi _{\epsilon}= 1\,\,\mathrm{in}\, \, C_{loc}^{1}(\overline{ \mathbb{R}^2_+})
	\end{equation}
and
	\begin{equation}\label{6}
    \lim_{ \epsilon\rightarrow0}\varphi _{\epsilon}= \varphi \, \, \mathrm{in}\, \, C_{loc}^{1}( \overline{ \mathbb{R}^2_+}),
	\end{equation}
where $\varphi$ satisfies
\begin{equation*}\left\{
\begin{aligned}
&\Delta_{\mathbb{R}^2} \varphi=0,\\
&\frac{ \partial \varphi _ { \epsilon } } { \partial \mathbf{v} }  =e^{2\pi  \varphi},\\
&\varphi ( 0 ) =\sup \varphi=0.\\
\end{aligned}
\right.
\end{equation*}
By a result of Li-Zhu \cite{Li-Zhu}, we have
\begin{equation}\label{7}
\varphi ( x ) = - \frac { 1 } { 2 \pi } \log \f( \pi ^ { 2 } x _ { 1 } ^ { 2 } + ( 1 + \pi x _ { 2 } ) ^ { 2 } \ri).
\end{equation}
A direct calculation gives
\begin{equation}\label{8}
\int _ { \partial \mathbb { R } _ { + } ^ { 2 } } e ^ { 2 \pi \varphi } dx_1= 1.
\end{equation}

Next we discuss the convergence behavior of $u_\epsilon$ away from $p$.
Denote $u_{\epsilon,\,\beta }=\min   \{\ \beta c_{\epsilon} , u_{\epsilon} \}  \in W^{1,2}(\Sigma, g ) $ for any real number $0<\beta<1$. Following (\cite{Yang2007}, Lemma 4.7), we get
\begin{equation}\label{9}
\lim_{\epsilon \rightarrow 0}\| \nabla_g u _ { \epsilon ,\ \beta } \| _ { 2 } ^ { 2 }=\beta.
\end{equation}

	\begin{lemma}\label{L6}
 Letting $\lambda _{\epsilon}$ be defined by (\ref{e-u}), we obtain
 \begin{flalign}
\begin{split}
&(i)\,\limsup_{ \epsilon \rightarrow 0 }\int _ { \partial\Sigma } e ^ {(\pi-\epsilon) u_\epsilon^ 2} ds_g
= \ell(\partial\Sigma)+\lim_{\epsilon \rightarrow 0}\,\frac{\lambda _{\epsilon}}{c_{\epsilon}^{2}} \\
&(ii)\,\lim_{\epsilon \rightarrow 0}\frac{\lambda _{\epsilon}}{c_{\epsilon}^{2}}
  =\lim_{R\rightarrow +\infty}  \lim_{\epsilon \rightarrow 0}\int_{\phi ^{-1}( \mathbb{B}_{Rr_{\epsilon}}( \bar{x}_{\epsilon} ) )\cap\partial\Sigma}{e^{(\pi-\epsilon)u_{\epsilon}^{2}}}\,ds_g.
\end{split}&
\end{flalign}
	\end{lemma}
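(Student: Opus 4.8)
The plan is to derive (ii) by an explicit rescaling that turns the boundary integral over the shrinking ball into $(\lambda_\epsilon/c_\epsilon^2)$ times an integral of $e^{2\pi\varphi_\epsilon}$, and then to deduce (i) from (ii) by a truncation argument based on (\ref{9}) and the sharp inequality (\ref{1}). Before starting I would pass to a subsequence so that $\lambda_\epsilon/c_\epsilon^2$ converges in $[0,+\infty]$ (the identities below being read in this extended sense; finiteness is obtained afterwards, in the step giving the upper bound under blow-up), and record that, $u_\epsilon$ being harmonic with $x_\epsilon$ a maximum point of $|u_\epsilon|$, the maximum principle forces $x_\epsilon\in\partial\Sigma$, hence $\bar{x}_\epsilon\in\partial\mathbb{R}^2_+$; in the isothermal coordinates near $p$ one has $\bar{x}_\epsilon\to0$, $r_\epsilon\to0$, and $ds_g=e^{f(x_1,0)}\,dx_1$ on $\partial\Sigma$.

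For (ii) I would substitute $y=\bar{x}_\epsilon+r_\epsilon x$ on $\partial\mathbb{R}^2_+$. Using $\bar{u}_\epsilon(\bar{x}_\epsilon+r_\epsilon x)=c_\epsilon+\varphi_\epsilon(x)/c_\epsilon$ (from (\ref{phi})), which gives $(\pi-\epsilon)\bar{u}_\epsilon(\bar{x}_\epsilon+r_\epsilon x)^2=(\pi-\epsilon)c_\epsilon^2+2(\pi-\epsilon)\varphi_\epsilon(x)+(\pi-\epsilon)\varphi_\epsilon(x)^2/c_\epsilon^2$, together with $r_\epsilon e^{(\pi-\epsilon)c_\epsilon^2}=\lambda_\epsilon/c_\epsilon^2$ (from (\ref{r})), the integral on the right of (ii) becomes
\[
\int_{\phi^{-1}(\mathbb{B}_{Rr_\epsilon}(\bar{x}_\epsilon))\cap\partial\Sigma}e^{(\pi-\epsilon)u_\epsilon^2}\,ds_g=\frac{\lambda_\epsilon}{c_\epsilon^2}\int_{\{|x|<R\}\cap\partial\mathbb{R}^2_+}e^{2(\pi-\epsilon)\varphi_\epsilon(x)}\,e^{(\pi-\epsilon)\varphi_\epsilon(x)^2/c_\epsilon^2}\,e^{f(\bar{x}_\epsilon+r_\epsilon x)}\,dx_1 .
\]
Letting $\epsilon\to0$, (\ref{6}) gives $\varphi_\epsilon\to\varphi$ uniformly on $\{|x|\le R\}\cap\partial\mathbb{R}^2_+$, while there $\varphi_\epsilon^2/c_\epsilon^2\to0$ and $f(\bar{x}_\epsilon+r_\epsilon x)\to f(0)=0$; hence the last integral tends to $\int_{\{|x|<R\}\cap\partial\mathbb{R}^2_+}e^{2\pi\varphi}\,dx_1$. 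Letting finally $R\to+\infty$ and invoking (\ref{8}) identifies the right-hand side of (ii) with $\lim_{\epsilon\to0}\lambda_\epsilon/c_\epsilon^2$.

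For (i) I would fix $0<\beta<1$, set $u_{\epsilon,\beta}=\min\{\beta c_\epsilon,u_\epsilon\}$, and first establish that $\int_{\partial\Sigma}e^{(\pi-\epsilon)u_{\epsilon,\beta}^2}\,ds_g\to\ell(\partial\Sigma)$. Indeed, (\ref{9}) gives $\|\nabla_g u_{\epsilon,\beta}\|_2^2\to\beta$, and $u_{\epsilon,\beta}^2\le u_\epsilon^2$ with Lemma \ref{L3} gives $\|u_{\epsilon,\beta}\|_{L^2(\partial\Sigma)}\to0$; subtracting the vanishing boundary average and normalising to unit Dirichlet energy, the strict inequality $\beta<1$ lets one apply (\ref{1}) with a sub-critical constant to conclude that $e^{(\pi-\epsilon)u_{\epsilon,\beta}^2}$ is bounded in $L^q(\partial\Sigma,g)$ for some $q>1$, hence uniformly integrable; since also $u_{\epsilon,\beta}\to0$ a.e. on $\partial\Sigma$, Vitali's theorem gives the claim. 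Next I would split $\partial\Sigma=\{u_\epsilon\le\beta c_\epsilon\}\cup\{u_\epsilon>\beta c_\epsilon\}$: on the first set $u_\epsilon=u_{\epsilon,\beta}$ and $|\{u_\epsilon>\beta c_\epsilon\}|\le\beta^{-2}c_\epsilon^{-2}\|u_\epsilon\|_{L^2(\partial\Sigma)}^2\to0$, so by uniform integrability $\int_{\{u_\epsilon\le\beta c_\epsilon\}}e^{(\pi-\epsilon)u_\epsilon^2}\,ds_g\to\ell(\partial\Sigma)$; on the second set $1\le u_\epsilon^2/(\beta^2c_\epsilon^2)$ and (\ref{e-u}) give $\int_{\{u_\epsilon>\beta c_\epsilon\}}e^{(\pi-\epsilon)u_\epsilon^2}\,ds_g\le\lambda_\epsilon/(\beta^2c_\epsilon^2)$. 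Adding, letting $\epsilon\to0$ and then $\beta\to1^-$ yields $\limsup_{\epsilon\to0}\int_{\partial\Sigma}e^{(\pi-\epsilon)u_\epsilon^2}\,ds_g\le\ell(\partial\Sigma)+\lim_{\epsilon\to0}\lambda_\epsilon/c_\epsilon^2$. For the reverse inequality, (\ref{5}) shows $u_\epsilon>\beta c_\epsilon$ on $\phi^{-1}(\mathbb{B}_{Rr_\epsilon}(\bar{x}_\epsilon))\cap\partial\Sigma$ for each fixed $R$ and $\epsilon$ small, so this set is disjoint from $\{u_\epsilon\le\beta c_\epsilon\}$ and
\[
\int_{\partial\Sigma}e^{(\pi-\epsilon)u_\epsilon^2}\,ds_g\ge\int_{\{u_\epsilon\le\beta c_\epsilon\}}e^{(\pi-\epsilon)u_\epsilon^2}\,ds_g+\int_{\phi^{-1}(\mathbb{B}_{Rr_\epsilon}(\bar{x}_\epsilon))\cap\partial\Sigma}e^{(\pi-\epsilon)u_\epsilon^2}\,ds_g ;
\]
letting $\epsilon\to0$ (using (ii) for the last term) and then $R\to+\infty$ gives $\liminf_{\epsilon\to0}\int_{\partial\Sigma}e^{(\pi-\epsilon)u_\epsilon^2}\,ds_g\ge\ell(\partial\Sigma)+\lim_{\epsilon\to0}\lambda_\epsilon/c_\epsilon^2$, which together with the upper bound proves (i).

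The main obstacle is the intermediate claim in the proof of (i): one has to truncate at height $\beta c_\epsilon$ and renormalise carefully so that, thanks to (\ref{9}) producing Dirichlet energy $\beta<1$ and to $\|u_{\epsilon,\beta}\|_{L^2(\partial\Sigma)}\to0$, the \emph{sharp} inequality (\ref{1}) applies with a constant strictly below the critical one and thus yields the $L^q$-bound with $q>1$ needed for uniform integrability; the final passage $\beta\to1^-$ is precisely what restores the critical constant and removes the defect introduced by the truncation. The remaining ingredients — the change of variables in (ii), the pointwise bound $1\le u_\epsilon^2/(\beta^2c_\epsilon^2)$ on $\{u_\epsilon>\beta c_\epsilon\}$, and the disjointness coming from (\ref{5}) — are routine.
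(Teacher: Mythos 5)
Your proposal is correct and follows essentially the same route as the paper: part (ii) by the identical change of variables $y=\bar{x}_\epsilon+r_\epsilon x$ together with (\ref{r}), (\ref{5})--(\ref{8}), and part (i) by truncating at $\beta c_\epsilon$, using (\ref{1}) and (\ref{9}) to control $e^{(\pi-\epsilon)u_{\epsilon,\beta}^2}$ in $L^q$ for some $q>1$, and the bound $\lambda_\epsilon/(\beta^2c_\epsilon^2)$ on the set $\{u_\epsilon>\beta c_\epsilon\}$. The only (harmless) deviation is the lower bound in (i): you obtain it from the disjointness of $\phi^{-1}(\mathbb{B}_{Rr_\epsilon}(\bar{x}_\epsilon))\cap\partial\Sigma$ and $\{u_\epsilon\le\beta c_\epsilon\}$ together with (ii), whereas the paper uses the pointwise inequality $e^{(\pi-\epsilon)u_\epsilon^2}-1\ge \frac{u_\epsilon^2}{c_\epsilon^2}\bigl(e^{(\pi-\epsilon)u_\epsilon^2}-1\bigr)$ and Lemma \ref{L3}.
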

	
	\begin{proof}
Recalling (\ref{e-u}) and (\ref{9}),  one gets for any real number $0<\beta<1$,
	\begin{equation*}
\begin{aligned}
\int _ { \partial\Sigma } e ^ {(\pi-\epsilon) u_\epsilon^ 2} ds_g-\ell(\partial \Sigma )
&=
\int_{\left\{ x\in \partial\Sigma :\, u_{\epsilon}\le \beta c_{\epsilon} \right\}}{\f( e^{(\pi-\epsilon)u_{\epsilon}^{2}}-1 \ri)\, ds_g}+\int_{\left\{ x\in \partial\Sigma :\, u_{\epsilon}>\beta c_{\epsilon} \right\}}{\f( e^{(\pi-\epsilon)u_{\epsilon}^{2}}-1 \ri)\, ds_g}
\\
&\le \int_{\partial\Sigma}{\f( e^{(\pi-\epsilon)u_{\epsilon ,\,\beta}^{2}}-1 \ri)\, ds_g}+\frac{1}{\beta ^2c_{\epsilon}^{2}}\int_{\left\{ x\in \partial\Sigma :\,u_{\epsilon}>\beta c_{\epsilon} \right\}}{u_{\epsilon}^{2}e^{(\pi-\epsilon)u_{\epsilon}^{2}}\,ds_g}+o_\epsilon(1)
\\
&\le \int_{\partial\Sigma}e^{(\pi-\epsilon)u_{\epsilon ,\,\beta}^{2}}(\pi-\epsilon)u_{\epsilon}^{2}\,ds_g+\frac{\lambda _{\epsilon}}{\beta ^2c_{\epsilon}^{2}}+o_\epsilon(1)
\\
&\le \f( \int_{\partial\Sigma}e^{r(\pi-\epsilon)u_{\epsilon ,\,\beta}^{2}}ds_g \ri) ^{1/r}
\f( \int_{\partial\Sigma} (\pi-\epsilon)^su_{\epsilon}^{2s}\,ds_g \ri) ^{1/s}+\frac{\lambda _{\epsilon}}{\beta ^2c_{\epsilon}^{2}}+o_\epsilon(1),
\end{aligned}
	\end{equation*}
where $r,\ s>1$ and $1/r+1/s=1$.
By (\ref{1}) and (\ref{9}), $e^{(\pi-\epsilon)u_{\epsilon ,\,\beta}^{2}}$ is bounded in $L^r( \partial\Sigma, g )$ for some $r>1$.
Letting $\epsilon \rightarrow 0$ first and then $\beta \rightarrow 1$, we obtain
	\begin{equation}\label{s111}
\limsup _ { \epsilon \rightarrow 0 } \int _ { \partial\Sigma } e ^ {(\pi-\epsilon) u_\epsilon^ 2} ds_g-\ell(\partial \Sigma )\leq \underset{\epsilon \rightarrow 0}{\limsup}\,\frac{\lambda _{\epsilon}}{c_{\epsilon}^{2}}.
	\end{equation}
According to $c_{\epsilon}={\max}_{\overline{\Sigma}}u_{\epsilon}$, (\ref{e-u}) and Lemma \ref{L3}, we have $$
e ^ {(\pi-\epsilon) u_\epsilon^ 2} ds_g -\ell(\partial \Sigma ) \ge \int_{\partial\Sigma}{\frac{u_{\epsilon}^{2}}{c_{\epsilon}^{2}}e^{(\pi-\epsilon)u_{\epsilon}^{2}}}\,ds_g-\frac{1}{c_{\epsilon}^{2}}\int_{\partial\Sigma}{u_{\epsilon}^{2}}\,ds_g,$$
that is to say
\begin{equation}\label{s112}
\limsup _ { \epsilon \rightarrow 0 }\int _ { \partial\Sigma } e ^ {(\pi-\epsilon) u_\epsilon^ 2} ds_g -\ell(\partial \Sigma ) \ge \liminf _ {\epsilon\rightarrow0}\frac{\lambda_{\epsilon}}{c_{\epsilon}^{2}}.
\end{equation}
Combining (\ref{s111}) and (\ref{s112}), one gets the equation ($i$).

Applying (\ref{e-u}) and (\ref{r})-(\ref{phi}), we have
\begin{equation*}
  \begin{aligned}
\int_{\phi ^{-1}( \mathbb{B}_{Rr_{\epsilon}}( \bar{x}_{\epsilon} ) )\cap\partial\Sigma}{e^{(\pi-\epsilon)u_{\epsilon}^{2}}}ds_g
&=\int_{\mathbb{B}_R  \cap\partial\mathbb{R}_+^2}{r_{\epsilon}e^{(\pi-\epsilon)c_{\epsilon}^{2}}
e^{(\pi-\epsilon)( \psi _{\epsilon} +1 ) \varphi _{\epsilon}}e^{f( \bar{x}_{\epsilon}+r_{\epsilon}x )}}dx_1
\\
&=\int_{\mathbb{B}_R \cap\partial\mathbb{R}_+^2}\frac{\lambda _{\epsilon}}{\beta _{\epsilon}c_{\epsilon}^{2}}{e^{(\pi-\epsilon)( \psi _{\epsilon} +1 )\, \varphi _{\epsilon}}e^{f( \bar{x}_{\epsilon}+r_{\epsilon}x )}}dx_1.
  \end{aligned}
\end{equation*}
From (\ref{5})-(\ref{8}), ($ii$) holds.
Summarizing, we have the lemma.
    \end{proof}

Next we consider the properties of $c_{\epsilon}u_{\epsilon}$.
Combining Lemma \ref{L6} and (\cite{Yang2007}, Lemma 4.9), we obtain
\begin{equation}\label{10}
 \frac {1} { \lambda _ { \epsilon } } c _ { \epsilon } u _ { \epsilon } e ^ { (\pi-\epsilon) u _ { \epsilon } ^ { 2 } } ds_g \rightharpoonup \delta_{p}.
\end{equation}
Furthermore, one has

\begin{lemma}\label{LG}
There holds
	\begin{equation*}
	\left\{ \begin{aligned}
	&c_{\epsilon}u_{\epsilon}\rightharpoonup G\, \, \mathrm{weakly\, \, in\,}\, W^{1,q}( \Sigma, g ), \,\forall 1<q<2, \\
	&c_{\epsilon}u_{\epsilon}\rightarrow G\, \, \mathrm{strongly\, \, in  \,}\, L^2( \partial\Sigma, g ),\\
	&c_{\epsilon}u_{\epsilon}\rightarrow G\ \mathrm{in\,}\, {C_{loc}^{1}( \overline{\Sigma}\backslash \left\{ p \right\} ) },
	\end{aligned} \right.
	\end{equation*}
where $G$ is a Green function satisfying
\begin{equation}\label{e-G}
\left\{
\begin{aligned}
&\Delta_g G=\delta _{p}  \,\,\,\mathrm{in}\,\,\,\overline{\Sigma},\\
&\frac{\partial G}{\partial\mathbf{n}}=\alpha G-\frac{1}{\ell(\partial \Sigma )}\,\,\,\mathrm{on}\,\,\,\partial\Sigma\backslash\left\{p\right\},\\
&\int_{\partial\Sigma}{Gds_g}=0.
\end{aligned}
\right.
\end{equation}

\end{lemma}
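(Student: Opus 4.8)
The plan is to analyze the sequence $c_\epsilon u_\epsilon$ by combining the convergence $\frac{1}{\lambda_\epsilon} c_\epsilon u_\epsilon e^{(\pi-\epsilon)u_\epsilon^2}\, ds_g \rightharpoonup \delta_p$ from \eqref{10} with standard linear elliptic theory on the equation \eqref{e-u}. Multiplying the boundary condition in \eqref{e-u} by $c_\epsilon$, the function $w_\epsilon := c_\epsilon u_\epsilon$ is harmonic in $\Sigma$ and satisfies $\partial_{\mathbf n} w_\epsilon = \frac{1}{\lambda_\epsilon} c_\epsilon u_\epsilon e^{(\pi-\epsilon)u_\epsilon^2} + \alpha w_\epsilon - \frac{c_\epsilon\mu_\epsilon}{\lambda_\epsilon}$ on $\partial\Sigma$, with $\int_{\partial\Sigma} w_\epsilon\, ds_g = 0$. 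The first step is to get a uniform $L^1(\partial\Sigma)$ bound on the right-hand side: the term $\frac{1}{\lambda_\epsilon}c_\epsilon u_\epsilon e^{(\pi-\epsilon)u_\epsilon^2}$ has $L^1$-norm tending to $1$ by \eqref{10} (its total mass converges to that of $\delta_p$), $\alpha w_\epsilon$ is controlled once we have the desired bound (bootstrap, or absorb using the eigenvalue gap $\alpha<\lambda_1(\partial\Sigma)$ and $\int_{\partial\Sigma}w_\epsilon = 0$), and $c_\epsilon\mu_\epsilon/\lambda_\epsilon$ is bounded since $\mu_\epsilon/\lambda_\epsilon$ is bounded by \eqref{L2} and $c_\epsilon\mu_\epsilon = \frac{c_\epsilon}{\ell(\partial\Sigma)}\int_{\partial\Sigma}u_\epsilon e^{(\pi-\epsilon)u_\epsilon^2}ds_g$ is bounded by the same reasoning that gave \eqref{L2} together with Lemma \ref{L6}.

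Once the Neumann data are bounded in $L^1(\partial\Sigma)$, standard $W^{1,q}$-estimates for harmonic functions with $L^1$ boundary data (the boundary analogue of the interior $W^{1,q}$ estimate for measure-valued right-hand sides, valid for all $1<q<2$) give that $w_\epsilon$ is bounded in $W^{1,q}(\Sigma,g)$ for every $1<q<2$. Hence, up to a subsequence, $w_\epsilon \rightharpoonup G$ weakly in $W^{1,q}(\Sigma,g)$; by the compactness of the trace embedding $W^{1,q}(\Sigma,g)\hookrightarrow L^2(\partial\Sigma,g)$ for $q$ close to $2$ (indeed $W^{1,q}$ traces into $L^{q/(2-q)}(\partial\Sigma)$, which contains $L^2$ for $q>4/3$), we get $w_\epsilon\to G$ strongly in $L^2(\partial\Sigma,g)$, and $\int_{\partial\Sigma}G\, ds_g = 0$. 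Passing to the limit in the weak formulation of the boundary-value problem, using \eqref{10} to identify the limiting right-hand side as $\delta_p$ and the $L^2(\partial\Sigma)$ convergence to pass to the limit in the $\alpha w_\epsilon$ term, we obtain that $G$ solves \eqref{e-G} in the distributional sense. The constant $-1/\ell(\partial\Sigma)$ on the boundary is forced by the compatibility (divergence-theorem) condition: integrating $\Delta_g G = \delta_p$ over $\Sigma$ matches $\int_{\partial\Sigma}\partial_{\mathbf n}G\, ds_g$, and $\int_{\partial\Sigma} G\, ds_g = 0$ kills the $\alpha G$ contribution to leading order — this pins down the constant and also shows $G$ is unique.

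Finally, for the local $C^1$ convergence away from $p$: on any $\overline\Sigma\setminus B_{r}(p)$, Lemma \ref{L3} (specifically the concentration $|\nabla_g u_\epsilon|^2 dv_g\rightharpoonup\delta_p$ and the fact, to be extracted as in \cite{Yang2007}, that $u_\epsilon\to 0$ uniformly there) shows $u_\epsilon^2\to 0$ uniformly on $\partial\Sigma\setminus B_r(p)$, so $e^{(\pi-\epsilon)u_\epsilon^2}$ is uniformly bounded there; thus the Neumann data $\partial_{\mathbf n}w_\epsilon$ are uniformly bounded in $L^\infty$ on $\partial\Sigma\setminus B_r(p)$, and since $w_\epsilon$ is bounded in $W^{1,q}$ hence in $L^1$, interior and boundary Schauder/elliptic estimates for the harmonic function $w_\epsilon$ on $\overline\Sigma\setminus B_{2r}(p)$ upgrade the weak convergence to $C^1_{loc}(\overline\Sigma\setminus\{p\})$ convergence to $G$. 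The main obstacle is the first step — obtaining the uniform $L^1(\partial\Sigma)$ control on $c_\epsilon u_\epsilon e^{(\pi-\epsilon)u_\epsilon^2}$ and on $c_\epsilon\mu_\epsilon$, and then correctly running the $W^{1,q}$ estimate with $L^1$ (measure) boundary data together with the absorption of the $\alpha w_\epsilon$ term using the eigenvalue gap; the rest is a routine adaptation of the corresponding arguments in \cite{Yang2007}.
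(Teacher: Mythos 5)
Your proposal follows essentially the same route as the paper: it uses the measure convergence \eqref{10} to control the Neumann data of $c_\epsilon u_\epsilon$, derives a uniform $W^{1,q}(\Sigma,g)$ bound for $1<q<2$ (the paper does this by the explicit duality argument of testing against $\Phi$ with $\|\Phi\|_{W^{1,q'}}=1$, $q'>2$, hence $\Phi\in C^0$ — which is exactly the $L^1$/measure-data elliptic estimate you invoke), identifies \eqref{e-G} by passing to the limit in the weak formulation, with $c_\epsilon\mu_\epsilon/\lambda_\epsilon\to 1/\ell(\partial\Sigma)$ obtained by integrating the equation, and upgrades to $C^1_{loc}$ away from $p$ via a cut-off and the local $L^q$-boundedness of $e^{(\pi-\epsilon)u_\epsilon^2}$ coming from Lemma \ref{L3}. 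This matches the paper's proof in all essentials.
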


\begin{proof}
From (\ref{e-u}), there holds
 	\begin{equation}\label{11}
 \left\{
\begin{aligned}
&\Delta_g ( c_{\epsilon}u_{\epsilon} ) =0\,\,\,\mathrm{in}\,\,\,\Sigma,\\
&\frac{\partial (c_{\epsilon}u_{\epsilon})}{\partial\mathbf{n}}=\frac{1}{\lambda _{\epsilon}}c_{\epsilon}u_{\epsilon}e^{(\pi-\epsilon)u_{\epsilon}^{2}}+\alpha c_{\epsilon}u_{\epsilon}-c_{\epsilon}\frac{\mu _{\epsilon}}{\lambda _{\epsilon}}\,\,\,\mathrm{on}\,\,\,\partial\Sigma\\
&\int_{\partial\Sigma}{c_{\epsilon}u_{\epsilon}\,ds_g}=0.
\end{aligned}
\right.
	\end{equation}
Integrating both side of (\ref{11}), we obtain \begin{equation}\label{01}
  \lim_{\epsilon\rightarrow0}\frac{c _ { \epsilon } \mu _ { \epsilon } }{ \lambda _ { \epsilon }}=\frac{1} { \ell ( \partial \Sigma )}.
\end{equation}
From the H\"older inequality and the Sobolev embedding theorem, one gets
\begin{equation}\label{02}
  \int_{\partial\Sigma}|c_{\epsilon}u_{\epsilon}|ds_g\leq \ell(\partial\Sigma)^{1/q^{'}}\|c_{\epsilon}u_{\epsilon}\|_{L^q{(\partial\Sigma)}}\\
  \leq C \|\nabla(c_{\epsilon}u_{\epsilon})\|_{L^q{(\Sigma)}}.
\end{equation}
It follows from Li-Liu \cite{Li-Liu} that
\begin{equation}\label{s11}
\int _ { \Sigma } | \nabla_g ( c _ { \epsilon } u _ { \epsilon } ) | ^ { q } d v _ { g } \leq \sup_{\| \Phi \| _ { H^ { 1 , q ^ { '} } } = 1}  \int _ { \Sigma } \nabla_g \Phi \nabla_g ( c _ { \epsilon } u _ { \epsilon } ) d v _ { g } ,
\end{equation}
where $1 / q + 1 / q ^ { '} = 1$.
For any $1<q<2$,
the Sobolev embedding theorem implies that $\| \Phi \| _ { C ^ { 0 } ( \Sigma ) } \leq C $, where $C$ is a constant depending only on $(\Sigma,g) .$
Using (\ref{10}), (\ref{11})-(\ref{s11}) and the divergence theorem, we have
\begin{equation*}
\begin{aligned}
\lVert \nabla_g ( c_{\epsilon}u_{\epsilon} ) \rVert _{L^q( \Sigma )}^{q}
&\leq \int_{\partial \Sigma}{\Phi}\frac{1}{\lambda _{\epsilon}}c_{\epsilon}u_{\epsilon}e^{( \pi -\epsilon ) u_{\epsilon}^{2}}ds_g+\alpha\int_{\partial\Sigma}\Phi c_{\epsilon}u_{\epsilon}ds_g-c_{\epsilon}\frac{\mu _{\epsilon}}{\lambda _{\epsilon}}\int_{\partial \Sigma}{\Phi}ds_g
\\
&\leq C \|\nabla(c_{\epsilon}u_{\epsilon})\|_{L^q{(\Sigma)}} +C,
\end{aligned}
\end{equation*}
which gives
$\|\nabla(c_{\epsilon}u_{\epsilon})\|_{L^q{(\Sigma)}} \leq C$.
The Poinca\'{r}e inequality implies that $c _ { \epsilon } u _ { \epsilon }$ is bounded in $W^ { 1 , q } ( \Sigma,g )$ for any $1 < q < 2$. Hence there exists some function $G $ such that
$c _ { \epsilon } u _ { \epsilon } \rightharpoonup G $ weakly in $W^ { 1 , q } ( \Sigma,g )$ and
$ c _ { \epsilon } u _ { \epsilon } \rightarrow G $ strongly in $L ^ { 2 } ( \partial \Sigma,g )$ as $\epsilon\rightarrow0$.
Testing $( \ref{11})$ by ${\Phi} \in C ^ { \infty } ( \overline { \Sigma } ) $, we have (\ref{e-G}).

For any fixed $\delta > 0 ,$ we choose a cut-off function $\eta \in C ^ { \infty } ( \overline { \Sigma } )$ such that $\eta \equiv 0$ on $\overline {B _ { \delta } ( p )}$ and $\eta \equiv 1$ on $\overline { \Sigma } \backslash B _ { 2 \delta } ( p ) .$
By Lemma $\ref{L3}$, we have $\lim_{\epsilon\rightarrow0}\left\| \nabla_g ( \eta u _ { \epsilon } ) \right\| _ { 2 }=0$.
Hence $\mathrm { e } ^ {(\pi-\epsilon) u _ { \epsilon } ^ { 2 } }$ is bounded in $L ^ { q } ( \overline { \Sigma } \backslash B _ { 2 \delta } ( p ))$ for any $q > 1$.
It follows from $( \ref{17})$ that
$\partial( c _ { \epsilon } u _ { \epsilon } ) / \partial \mathbf{n }\in L ^ { q _ { 0 } } ( \overline{\Sigma} \backslash B _ { 2 \delta } ( p ))$ for some $q _ { 0 } > 2 .$
Applying the elliptic estimate to (\ref{11}), we get $c _ { \epsilon } u _ { \epsilon }$ is bounded in
$C ^ { 1 } (\overline{ \Sigma} \backslash B _ { 4 \delta } ( p ))$.
This completes the proof of the lemma.
	\end{proof}

Applying the elliptic estimate to (\ref{e-G}), we can decompose $G$ near $p$ as the form
\begin{equation}\label{G}
  G=-\frac{1}{\pi}\log r+A_{p}+O(r),
\end{equation}
where $r=dist(x,p)$ and $A_{p}$ is a constant depending on $\alpha , p$ and $(\Sigma,g)$.

\subsection{Upper bound estimate}
To derive an upper bound of $\sup _ { u \in \mathcal { H } } \int _ { \partial\Sigma } e ^ {\pi u^ 2} ds_g$, we use the capacity estimate, which was first used by Li \cite{Li-JPDE} in this topic and also used by Li-Liu \cite{Yang2007}.

	\begin{lemma}\label{L8}
Under the hypotheses $c_{\epsilon}\rightarrow +\infty$ and $x _ { \epsilon } \rightarrow p \in \partial\Sigma$ as $\epsilon\rightarrow0$, there holds
	\begin{equation}\label{15}
	\sup _ { u \in \mathcal { H } } \int _ { \partial\Sigma } e ^ {\pi u^ 2} ds_g \leq \ell(\partial \Sigma ) + 2\pi e ^ { \pi A _ { p} }.
    \end{equation}
	\end{lemma}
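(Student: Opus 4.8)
The plan is to estimate the integral $\int_{\partial\Sigma}e^{\pi u^2}\,ds_g$ from above by splitting $\partial\Sigma$ into the small boundary arc $\phi^{-1}(\mathbb{B}_{\delta}(0))\cap\partial\Sigma$ near the blow-up point $p$ and its complement, and to control the near-$p$ piece by a capacity (Dirichlet-energy) argument on the annular region $\phi^{-1}(\mathbb{B}_\delta(0)\setminus\mathbb{B}_{Rr_\epsilon}(\bar x_\epsilon))\cap\Sigma$. On the outer region $\partial\Sigma\setminus\phi^{-1}(\mathbb{B}_\delta(0))$ the convergence $c_\epsilon u_\epsilon\to G$ in $C^1_{loc}(\overline\Sigma\setminus\{p\})$ from Lemma \ref{LG} gives $u_\epsilon\to 0$ there, so that $\int_{\partial\Sigma\setminus\phi^{-1}(\mathbb{B}_\delta(0))}e^{(\pi-\epsilon)u_\epsilon^2}\,ds_g=\ell(\partial\Sigma)+o_\epsilon(1)+o_\delta(1)$; more precisely one writes $e^{(\pi-\epsilon)u_\epsilon^2}\le 1+(\pi-\epsilon)u_\epsilon^2e^{(\pi-\epsilon)u_\epsilon^2}$ and uses that the remaining mass of the measure $\lambda_\epsilon^{-1}u_\epsilon^2e^{(\pi-\epsilon)u_\epsilon^2}ds_g$ concentrates at $p$. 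On the inner arc one uses part $(ii)$ of Lemma \ref{L6}, which already identifies $\lim_{\epsilon\to0}\lambda_\epsilon/c_\epsilon^2$ with the limit of $\int_{\phi^{-1}(\mathbb{B}_{Rr_\epsilon}(\bar x_\epsilon))\cap\partial\Sigma}e^{(\pi-\epsilon)u_\epsilon^2}\,ds_g$; combining with part $(i)$ reduces everything to bounding $\lim_{\epsilon\to0}\lambda_\epsilon/c_\epsilon^2$ by $2\pi e^{\pi A_p}$.

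The core step is therefore the capacity estimate: on the annulus $A_{\delta,R}=\phi^{-1}\big((\mathbb{B}_\delta(0)\setminus\mathbb{B}_{Rr_\epsilon}(\bar x_\epsilon))\cap\mathbb{R}^2_+\big)$, I would compare $u_\epsilon$ with the harmonic function (in the flat metric, which is conformal and hence has the same Dirichlet energy) having the appropriate boundary values: roughly $u_\epsilon\approx c_\epsilon+\varphi_\epsilon/c_\epsilon$ on the inner circle $\{|x-\bar x_\epsilon|=Rr_\epsilon\}$ and $u_\epsilon\approx G/c_\epsilon$ on the outer circle $\{|x-\bar x_\epsilon|=\delta\}$. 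Using $\int_{\mathbb{B}_\delta^+\setminus\mathbb{B}_{Rr_\epsilon}^+}|\nabla_g u_\epsilon|^2\,dv_g\ge 1-\|\nabla_g u_{\epsilon,\beta}\|_2^2-\int_{\mathbb{B}_\delta^+}|\nabla_g(\eta u_\epsilon)|^2 \ge 1-o_\delta(1)-o_\epsilon(1)$ from (\ref{9}) and Lemma \ref{L3}, together with the explicit minimal-energy (harmonic-interpolation) value on a half-annulus, which is $\pi/\log(\delta/(Rr_\epsilon))$, I get a lower bound on the energy in terms of the boundary data. Plugging in $u_\epsilon$'s values, expanding $G=-\tfrac1\pi\log r+A_p+O(r)$ from (\ref{G}), and using $\varphi$'s asymptotics $\varphi(x)=-\tfrac1\pi\log|x|+O(1)$ from (\ref{7}), the inequality $1-o_\delta(1)-o_\epsilon(1)\le \text{(energy of }u_\epsilon\text{ on the annulus)}$ rearranges into an inequality of the form
\begin{equation*}
\pi c_\epsilon^2\le \log\frac{\lambda_\epsilon}{c_\epsilon^2}+\pi A_p+\log(2\pi)-(\pi-\epsilon)c_\epsilon^2+o(1)+\text{(terms }\to 0\text{ as }\epsilon\to0,R\to\infty,\delta\to0),
\end{equation*}
equivalently $\lambda_\epsilon/c_\epsilon^2\le 2\pi e^{\pi A_p}(1+o(1))$; here the delicate bookkeeping is that the constant $2\pi$ (rather than $4\pi$, as in the interior case) and the coefficient $\pi$ in the exponent both come from the boundary geometry, i.e. from the half-annulus capacity and from $\int_{\partial\mathbb{R}^2_+}e^{2\pi\varphi}dx_1=1$ in (\ref{8}).

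Finally I would combine: $\sup_{u\in\mathcal H}\int_{\partial\Sigma}e^{\pi u^2}\,ds_g=\lim_{\epsilon\to0}\int_{\partial\Sigma}e^{(\pi-\epsilon)u_\epsilon^2}\,ds_g=\ell(\partial\Sigma)+\lim_{\epsilon\to0}\lambda_\epsilon/c_\epsilon^2\le\ell(\partial\Sigma)+2\pi e^{\pi A_p}$, using (\ref{3}) and Lemma \ref{L6}. The main obstacle is making the capacity comparison rigorous in the presence of the conformal factor $e^{2f}$ and the lower-order terms $\alpha u_\epsilon$, $-\mu_\epsilon/\lambda_\epsilon$ in the Euler--Lagrange equation (\ref{e-u}): one must check these contribute only $o(1)$ to the annular Dirichlet energy, and one must handle the oscillation of $u_\epsilon$ on the two bounding circles (replacing $u_\epsilon$ by its average, or by a radial harmonic competitor, and estimating the error). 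This is exactly the step where one follows the scheme of Li \cite{Li-JPDE} and Yang \cite{Yang2007} adapted to the trace setting.
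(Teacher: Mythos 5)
Your proposal follows the paper's proof essentially step for step: reduce via Lemma \ref{L6} and (\ref{3}) to bounding $\lim_{\epsilon\rightarrow 0}\lambda_\epsilon/c_\epsilon^2$ by $2\pi e^{\pi A_p}$, then run a capacity estimate on the half-annulus $\mathbb{B}^+_\delta\setminus\mathbb{B}^+_{Rr_\epsilon}$ with boundary data read off from $G$ on the outer arc and from the bubble $\varphi$ on the inner arc, using the harmonic interpolant of energy $\pi(a-b)^2/\log\left(\delta/(Rr_\epsilon)\right)$ and the identity $\log\delta-\log(Rr_\epsilon)=(\pi-\epsilon)c_\epsilon^2-\log(\lambda_\epsilon/c_\epsilon^2)+\log(\delta/R)$. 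Two points in your write-up need correcting, though neither changes the strategy. First, what the argument actually requires is an \emph{upper} bound on the annular Dirichlet energy that is sharp to order $c_\epsilon^{-2}$, namely $\int_{\mathbb{B}^+_\delta\setminus\mathbb{B}^+_{Rr_\epsilon}}|\nabla u_\epsilon|^2\,dx\le 1+\alpha\int_{\partial\Sigma}u_\epsilon^2\,ds_g-\int_{\Sigma\setminus B^+_\delta}|\nabla_g u_\epsilon|^2\,dv_g-\int_{B^+_{Rr_\epsilon}}|\nabla_g u_\epsilon|^2\,dv_g$, with the last two integrals evaluated exactly as $c_\epsilon^{-2}\left(-\pi^{-1}\log\delta+A_p+\alpha\|G\|_{L^2(\partial\Sigma)}^2+o(1)\right)$ and $c_\epsilon^{-2}\left(\pi^{-1}\log R+\pi^{-1}\log(\pi/2)+o(1)\right)$; your stated bound ``$\ge 1-o_\delta(1)-o_\epsilon(1)$'' is both in the wrong direction and too coarse, since an unquantified $o(1)$ multiplied by $\log\left(\delta/(Rr_\epsilon)\right)=O(c_\epsilon^2)$ destroys the constant $2\pi e^{\pi A_p}$ entirely. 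Second, your displayed inequality has the signs of $\log(\lambda_\epsilon/c_\epsilon^2)$ and of $(\pi-\epsilon)c_\epsilon^2$ flipped (as written it would force $\lambda_\epsilon/c_\epsilon^2\rightarrow+\infty$); the correct rearrangement is $\log(\lambda_\epsilon/c_\epsilon^2)\le\log(2\pi)+\pi A_p-\epsilon c_\epsilon^2+o(1)$, which is consistent with the conclusion you then (correctly) state.
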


	\begin{proof}
 We take an isothermal coordinate system $(U,\phi)$ near $p$ such that $\phi(p) = 0$, $\phi$ maps $U$ to $\mathbb { R } ^ { 2  }_+$, and $\phi (U\cap \partial\Sigma)\subset \partial\mathbb { R } ^ { 2  }_+$. In such coordinates, the metric $g$ has the representation $g = e^{2f} (dx_1^2 +dx_2^2 )$ and $f$ is a smooth function with $f(0)  = 0$. Denote $\bar{u}_\epsilon=u_\epsilon\circ\phi^{-1}$.
We claim that
\begin{equation}\label{16}
\underset{\epsilon \rightarrow 0}{\lim}\ \frac{\lambda _{\epsilon}}{c_{\epsilon}^{2}}\leq 2\pi e^{\pi A_{p}}.
\end{equation}

To confirm this claim, we set
$$W_{a,b}:=\left\{ \bar{u}_\epsilon\in W^{1,2} (\mathbb{B}^+_ \delta\setminus \mathbb{B}^+ _{Rr_\epsilon} )  :\    {\sup_ { \partial \mathbb { B } ^+_ { \delta }  \backslash \partial \mathbb { R } ^ { 2 }_+ } }\bar{u}_\epsilon =a,\, \inf _ { \partial \mathbb { B } ^+_{Rr_\epsilon}\backslash \partial \mathbb { R } ^ { 2 }_+ }\bar{u}_\epsilon=b, \,\left.\frac{\partial u}{\partial \mathbf{v}}\right|_{\partial \mathbb{R}^{2}_+ \cap (\mathbb{B}_{\delta} \backslash \mathbb{B}_{R r_{\epsilon}})}=0 \right\}$$ for some small $\delta\in(0,1)$ and some fixed $R>0$.
According to (\ref{6}), (\ref{7}), (\ref{G}) and Lemma \ref{LG}, one gets
 $$
 a=\frac{1}{c_{\epsilon}}\f( \frac{1}{\pi}\log \frac{1}{\delta}+A_{p}+o_\delta(1)+o_{\epsilon}(1)\ri)$$ and
$$
b=c_{\epsilon}+\frac{1}{c_{\epsilon}}\f( -\frac{1}{2\pi}\log ( 1+{\pi}^2 R^{2} ) +o_{\epsilon}( 1 ) \ri),$$
where $o_\delta(1)\rightarrow0$, $o_{\epsilon}(1)\rightarrow0$ as $\epsilon\rightarrow 0$ .
From a direct computation, there holds
\begin{equation}\label{17}
\pi ( a-b ) ^2=\pi c_{\epsilon}^{2}+2\log \delta -2\pi A_{p}-\log ( 1+\pi^2 R^2 ) +o_{\delta}( 1 ) +o_{\epsilon}( 1 ).
\end{equation}
The direct method of variation implies that
$\inf_{u\in W_{a,b}} \int_{\mathbb{B}^+_ \delta \setminus \mathbb{B}^+ _{Rr_\epsilon}  }{|\nabla_{\mathbb{R}^2} u|^2}dx$ can be attained by some function $m(x)\in W_{a,b}$ with $\Delta_{\mathbb{R}^2} m( x ) =0$.
We can check that
\begin{equation*}
m( x ) =\frac{a\f( \log|x|-\log ( {Rr_\epsilon} ) \ri) +b\f( \log \delta -\log|x| \ri)}{\log \delta -\log ( {Rr_\epsilon})}
\end{equation*}
and
\begin{equation}\label{19}
\int_{\mathbb{B}^+_ \delta \setminus \mathbb{B}^+ _{Rr_\epsilon}  }{|\nabla_{\mathbb{R}^2} m( x ) |^2}dx=\frac{\pi ( a-b) ^2}{\log \delta -\log ( {Rr_\epsilon} )}.
\end{equation}
Recalling (\ref{e-u}) and (\ref{r}), we have
\begin{equation}\label{20}
\log \delta -\log ( {Rr_\epsilon} ) =\log \delta -\log R-\log \frac{\lambda _{\epsilon}}{c_{\epsilon}^{2}}+{(\pi-\epsilon) c_{\epsilon}^{2} }.
\end{equation}

Letting $u^*_{\epsilon}=\max \left\{ a,\ \min \left\{b,\ \bar{u}_{\epsilon} \right\} \right\} \in W_{a,b}$, one gets $| \nabla_{\mathbb{R}^2} u^*_{\epsilon} |\leq | \nabla_{\mathbb{R}^2} \bar{u}_{\epsilon} |$ in $\mathbb{B}^+_ \delta \setminus \mathbb{B}^+ _{Rr_\epsilon}  $ for sufficiently small $\epsilon$.
These and $\|u_\epsilon\|_{1, \alpha}= 1$ lead to
\begin{equation}\label{21}
  \begin{aligned}
 \int_{\mathbb{B}^+_ \delta \setminus \mathbb{B}^+ _{Rr_\epsilon}  }{|\nabla _{\mathbb{R}^2}m( x ) |^2}dx
\leq 1+\alpha \int _{\partial\Sigma}  {u_\epsilon^2}ds_g -\int_{\Sigma \backslash \phi ^{-1}(\mathbb{B}^+ _ \delta )}{| \nabla_g u_{\epsilon} |}^2dv_g-\int_{\phi ^{-1}( \mathbb{B}^+ _{Rr_\epsilon}  )}{| \nabla_g u_{\epsilon} |}^2dv_g.
  \end{aligned}
\end{equation}
Now we compute $\int_{\Sigma \backslash \phi ^{-1}(\mathbb{B}^+ _ \delta )}{| \nabla_g u_{\epsilon} |}^2dv_g$ and $\int_{\phi ^{-1}( \mathbb{B}^+ _{Rr_\epsilon}  )}{| \nabla_g u_{\epsilon} |}^2dv_g$.
In view of (\ref{G}), we obtain
\begin{equation}\label{G3}
\int_{\Sigma \backslash \phi ^{-1}(\mathbb{B}^+ _ \delta )}{|}\nabla_g G|^2dv_g=-\frac{1}{\pi}\log \delta +A_{p}+\alpha \lVert G \rVert _{L^2(\partial\Sigma)}^{2}+o_{\epsilon}( 1 ) +o_{\delta}( 1 ).
\end{equation}
According to (\ref{phi}), (\ref{6}) and (\ref{7}), one gets
\begin{equation}\label{22}
\int_{\phi ^{-1}( \mathbb{B}^+ _{Rr_\epsilon}  )}{| \nabla_g u_{\epsilon} |}^2dv_g=\frac{1}{c_{\epsilon}^{2}}\f( \frac{1}{\pi}\log  R+\frac{1}{\pi}\log \frac{\pi}{2}+o_\epsilon( 1 )+o_{R}( 1 )\ri),
\end{equation}
where $o_R( 1 ) \rightarrow 0$ as $R\rightarrow +\infty $.
In view of (\ref{17})-(\ref{22}), we obtain
\begin{equation*}
\log \frac{\lambda _{\epsilon}}{c_{\epsilon}^{2}}\leq \log {(2\pi)}+\pi A_{p} +o(1),
\end{equation*}
where $o( 1 ) \rightarrow 0$ as $\epsilon \rightarrow 0$ first, then $R\rightarrow +\infty $ and $\delta \rightarrow 0$.
Hence the claim (\ref{16}) is confirmed.
Combining (\ref{3}), (\ref{16}) and Lemma \ref{L6}, we finish the proof of the lemma.
	\end{proof}

\subsection{Existence result}\label{sub2.4}

The content in this section is carried out under the hypothesis $0 \leq \alpha < \lambda_1(\partial\Sigma)$.
We take an isothermal coordinate system $(U,\phi)$ near $p$ such that $\phi(p) = 0$, $\phi$ maps $U$ to $\mathbb { R } ^ { 2  }_+$, and $\phi (U\cap \partial\Sigma)\subset \partial\mathbb { R } ^ { 2  }_+$. In such coordinates, the metric $g$ has the representation $g = e^{2f} (dx_1^2 +dx_2^2 )$ and $f$ is a smooth function with $f(0)  = 0$.
Set a cut-off function
$\xi \in C _ { 0 } ^ { \infty } ( B _ { 2 R \epsilon } ( p ) )$ with $\xi = 1$ on $B _ { R \epsilon } ( p)$ and $\| \nabla_g \xi \| _ { L ^ { \infty } } = O \f( 1/ (  R \epsilon )\right)$. Denote $B^+_r=\phi^{-1}(\mathbb{B}^+_r)$ and $\beta =G+1/\pi \log r-A_{p}$, where $G$ is defined as in (\ref{G}). Let $R= -\log \epsilon $, then $R\rightarrow+\infty$ and $R\epsilon\rightarrow0$ as $\epsilon\rightarrow0$.
We construct a blow-up sequence of functions
	\begin{equation}\label{23}
    v_{\epsilon}=\left\{\begin{aligned}
	&\f({ c- \frac { 1 } { 2 \pi c } \log  \frac { \pi ^ { 2 } x_1 ^ { 2 }+ ({\pi x_2}+\epsilon)^2} { \epsilon ^ { 2 } }   + \frac{ B }{c}} \right)\circ\phi\ ,&		\,\,& x\in B^+_{ R\epsilon},\\
	&\frac{G-\xi \beta}{c},&		\,\,\ \ &x\in B^+_{2R\epsilon}\backslash B^+_{R\epsilon},\\
	&\frac{G}{c},&		\,\,&x\in \Sigma \backslash B^+_{2R\epsilon},\\
\end{aligned} \right.
	\end{equation}
for some constants $B$, $c$ to be determined later,
such that
\begin{equation}\label{v}
\int _ { \Sigma } | \nabla _ { g } v_ { \epsilon } | ^ { 2 } d v _ { g } - \alpha \int _ { \partial \Sigma } (v _ { \epsilon } - \overline { v } _ { \epsilon } ) ^ { 2 } d s _ { g } = 1
\end{equation}
and $v _ { \epsilon } - \overline { v } _ { \epsilon }\in \mathcal{H}$,
where $\overline { v } _ { \epsilon } =   \int _ { \partial \Sigma } v _ { \epsilon } d s _ { g }/{ \ell ( \partial \Sigma ) }.$
Note that $\int _ { \partial \Sigma } G d s _ { g } = 0 ,$ one has $\overline { v } _ { \epsilon }= O ( R \epsilon \log ( R \epsilon ) ) ,$ and then
\begin{equation}\label{v1}
  \int _ { \partial \Sigma } \left|v _ { \epsilon } - \overline { v } _ { \epsilon } \right| ^ { 2 } d s _ { g } = \frac { \| G \| _ { L ^ { 2 } ( \partial \Sigma ) } ^ { 2 } } { c ^ { 2 } } + O \f( R \epsilon \log ^ { 2 } ( R \epsilon ) \ri).
\end{equation}

In order to assure that $v _{\epsilon}\in W^{1,2}( \Sigma, g)$, we obtain
$$  { c^2- \frac { 1 } { 2 \pi } \log ( \pi ^ { 2 } R ^ { 2 } ) + B +O\left(\frac{1}{R}\right)} ={-\frac{1}{2\pi}\log ( R\epsilon) +A_{p}},$$
which is equivalent to
	\begin{equation}\label{c1}
c^2=\frac{1}{\pi}\log \pi -\frac{1}{\pi}\log \epsilon-B +A_{p}+O\left(\frac{1}{R}\right).
	\end{equation}
A delicate calculation shows
\begin{equation*}
\begin{aligned} \int _ { B _ { R \epsilon } ^ { + } (p)} | \nabla _ { g } v _ { \epsilon } | ^ { 2 } d v _ { g }
& = \frac { 1 } { 4 \pi ^ { 2 } c ^ { 2 } } \int _ { Q ( R ) } \f| \nabla _ { \mathbb { R } ^ { 2 } } \log \f( \pi ^ { 2 } x_1 ^ { 2 } + \pi ^ { 2 } x_2 ^ { 2 } \right)\right| ^ { 2 } d x_1 d x_2 \\
& = \frac { 1 } { \pi ^ { 2 } c ^ { 2 } } \f(\pi \log(\pi R)+ \int _ {0 }^{\pi}\log \sin \theta \ d\theta-2 \int_0^{\arcsin{\frac{1}{\pi R}}}\log \sin \theta \ d\theta +O\f(\frac{\log R}{R}\ri) \ri)\\
& = \frac { 1 } { \pi c ^ { 2 } } \f( \log R + \log \frac { \pi } { 2 } + O \f( \frac { \log R } { R } \ri) \ri), \end{aligned}
\end{equation*}
where $Q ( R ) =  \{ ( x_1 , x_2 ): ( x_1 , x_2-1/\pi ) \in \mathbb { B } _ { R } ^+ ,\ x_2 \geq 1 / \pi  \}$.
According to (\ref{23}) and (\ref{G3}), one has
\begin{equation*}
  \int _ { \Sigma \backslash  B_ {R\epsilon }^+(p) } | \nabla _ { g } v _ { \epsilon } | ^ { 2 } d v _ { g } = \frac {1}{ c ^ { 2 } }\f( A _ { p } + \alpha \| G \| _ { L ^ { 2 } ( \partial \Sigma ) } ^ { 2 } - \frac { 1 } { \pi } \log ( R \epsilon ) + O \f( R \epsilon \log ^ { 2 } ( R \epsilon ) \ri) \ri).
\end{equation*}
Then we get
\begin{equation}\label{v2}
\int _ { \Sigma  } | \nabla _ { g } v _ { \epsilon } | ^ { 2 } d v _ { g } = \frac {1}{ c ^ { 2 } }\f( A _ { p } + \alpha \| G \| _ { L ^ { 2 } ( \partial \Sigma ) } ^ { 2 }  +\frac { 1 } { \pi } \log \f(\frac{\pi}{2\epsilon} \ri)+O \f( \frac { \log R } { R } \ri)+ O \f( R \epsilon \log ^ { 2 } ( R \epsilon ) \ri) \ri).
\end{equation}
In view of (\ref{v}), (\ref{v1}), (\ref{v2}), there holds
\begin{equation}\label{c2}
{c}^2=A_{p}+\frac { 1 } { \pi } \log \f(\frac{\pi}{2\epsilon} \ri)+O\f( \frac { \log R } { R } \ri) +O\f( R\epsilon \log^2 ( R\epsilon ) \ri).
\end{equation}
According to (\ref{c1}) and (\ref{c2}), one gets
	\begin{equation*}\label{B}
B = \frac { 1 } {  \pi }\log 2 + O\f( \frac { \log R } { R } \ri) +O\f( R\epsilon \log^2 ( R\epsilon )\ri).
	\end{equation*}
It follows that in $\partial\Sigma \cap \partial B _ { R \epsilon } ^+(p)$,
$$
 \pi ( v _ { \epsilon } - \overline { v } _ { \epsilon } ) ^ { 2 }  \geq   \log ( 2 \pi ) + \pi A _ { p } - \log \f(  \frac { \epsilon^2+\pi ^ { 2 } x_1 ^ 2 } { \epsilon  } \ri)  + O\f( \frac { \log R } { R } \ri) + O \f( R \epsilon \log ^ {2} (R \epsilon)\ri).
$$
Hence
	\begin{equation}\label{27}
\int_{\partial\Sigma \cap \partial B _ { R \epsilon } ^+(p)}{e}^{\pi ( v _ { \epsilon } - \overline { v } _ { \epsilon } ) ^ { 2 }}\,ds_g
\ge 2\pi e^{\pi A_{p}}+O \f( \frac { \log R } { R } \ri) + O \f( R \epsilon \log ^ {2} ( R \epsilon ) \ri).
	\end{equation}	
On the other hand, from the fact $e^t\geq t+1$ for any $t>0$ and (\ref{23}), we get
	\begin{equation}\label{24}
	\begin{aligned}
\int_{\partial\Sigma \backslash \partial B_{R\epsilon}^+( p)}{e}^{\pi ( v _ { \epsilon } - \overline { v } _ { \epsilon } ) ^ { 2 }}\,ds_g
&\ge \int_{\partial\Sigma \backslash \partial B_{R\epsilon}^+( p)}{\f( 1+\pi ( v _ { \epsilon } - \overline { v } _ { \epsilon } ) ^2\ri)}\,ds_g\\
    &\ge \ell(\partial \Sigma ) +\frac{\pi \| G \| _ { L ^ { 2 } ( \partial \Sigma ) } ^ { 2 }}{c^2}+O\f( R \epsilon \log ^ {2} ( R \epsilon ) \ri).
	\end{aligned}
	\end{equation}
From (\ref{27}) and (\ref{24}), there holds
	\begin{equation}\label{25}
    \begin{aligned}
\int_{\partial\Sigma }{e}^{\pi ( v _ { \epsilon } - \overline { v } _ { \epsilon } ) ^ { 2 }}\,ds_g>\ell(\partial \Sigma ) +2\pi e^{\pi A_{p}}
    \end{aligned}	
    \end{equation}
for sufficiently small $\epsilon$.
The contradiction between (\ref{15}) and (\ref{25}) indicates that $c_{\epsilon}$ must be bounded, which together with elliptic estimates completes the proof of Theorem \ref{T1}.

$\hfill\Box$

\section{Higher order eigenvalue cases}
In this section, we will prove Theorem \ref{2T1} involving higher order eigenvalues through blow-up analysis.
Let $\ell$ be a positive integer and $E_\ell(\partial\Sigma)$ be defined by (\ref{l1}).
Denote the dimension of $E_\ell(\partial\Sigma)$ is $s_{\ell}$.
From (\cite{Brezis}, Theorem 9.31), it is known that $s_{\ell}$ is a finite constant depending only on $\ell$. Then we can find a set of normal orthogonal basis $\{e_i\in C^{\infty}(\overline{\Sigma}),\ 1\leq i\leq s_{\ell}\}$ of $E_\ell(\partial\Sigma)$ satisfying
\begin{equation}\label{ee}
\left\{
\begin{aligned}
&\int _ { \partial\Sigma }  e _ { i} \, d s_g =0,\\
& \Delta_g e_i =0 \ \ \mathrm{in}\ \  \Sigma.\\
\end{aligned}
\right.
\end{equation}

	\subsection{Blow-up analysis}

  Let $\lambda_{\ell+1}(\partial\Sigma)$ and $\mathcal { S }$ be defined by (\ref{lal}) and (\ref{S}). In view of Lemma \ref{L1} and (\ref{ee}), we have
\begin{lemma}\label{2L1}
Let $0 \leq \alpha < \lambda_{\ell+1}(\partial\Sigma)$ be fixed. For any $0<\epsilon <\pi$, the supremum
$\sup_{ u \in \mathcal { S} }\int _ { \partial\Sigma } e ^ {(\pi-\epsilon) u^ 2} ds_g$ is attained by
some function $u_\epsilon \in \mathcal{S}  \cap C^{\infty} (\overline{ \Sigma})$.
Moreover, the Euler-Lagrange equation of $u_{\epsilon}$ is
\begin{equation}\label{2e-u}
	\left\{
\begin{aligned}
&\Delta_g u _ { \epsilon } = 0 \,\,\, \mathrm{in}\,\,\, \Sigma ,\\
& \frac { \partial u _ { \epsilon } } { \partial \mathbf{n} } = \frac { 1 } { \lambda _ { \epsilon } } u _ { \epsilon } e ^ {(\pi- \epsilon) u _ { \epsilon } ^ { 2 } } + \alpha u _ { \epsilon } - \frac { \mu _ { \epsilon } } { \lambda _ { \epsilon } }-\sum^{s_{\ell}}_{i=1}\frac {\beta _ { \epsilon , i}}{\lambda _ { \epsilon }}e_i \,\,\, \mathrm{ on } \,\,\, \partial \Sigma ,\\
&\lambda _ { \epsilon }= {\int _ { \partial \Sigma } u _ { \epsilon } ^ { 2 } \mathrm { e } ^ { (\pi- { \epsilon }) u _ { \epsilon } ^ { 2 } } ds _ { g }} ,\\
&\mu _ { \epsilon }= \frac {1} { \ell ( \partial \Sigma ) } \int _ { \partial \Sigma } u _ { \epsilon } \mathrm { e } ^ { (\pi-\epsilon) u _ { \epsilon } ^ { 2 } }ds _ { g },\\
&\beta _ { \epsilon ,i}= \int _ { \partial \Sigma } u _ { \epsilon } \mathrm { e } ^ { (\pi-\epsilon) u _ { \epsilon } ^ { 2 } }e_i ds _ { g }.
 \end{aligned} \right.
	\end{equation}
\end{lemma}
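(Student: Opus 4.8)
The plan is to follow the proof of Lemma~\ref{L1} (see also \cite{Yang2006,Yang2007}), the only structural novelty being the finitely many linear trace constraints cutting out $E_\ell^\perp(\partial\Sigma)$, which cause no trouble thanks to the compactness of the trace map $W^{1,2}(\Sigma,g)\hookrightarrow L^2(\partial\Sigma,g)$. Fix $0<\epsilon<\pi$ and pick a maximizing sequence $\{u_k\}\subset\mathcal S$ for $\int_{\partial\Sigma}e^{(\pi-\epsilon)u^2}ds_g$; replacing $u_k$ by $u_k/\|u_k\|_{1,\alpha}$, which again lies in $\mathcal S$ and does not decrease the functional (since $\|u_k\|_{1,\alpha}\le 1$), I may assume $\|u_k\|_{1,\alpha}=1$. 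Because $0\le\alpha<\lambda_{\ell+1}(\partial\Sigma)$, the characterization (\ref{lal}) gives $\int_{\partial\Sigma}u^2ds_g\le\lambda_{\ell+1}(\partial\Sigma)^{-1}\int_\Sigma|\nabla_gu|^2dv_g$ for every $u\in\mathcal S$, whence
\[
\int_\Sigma|\nabla_gu|^2dv_g\le\frac{\lambda_{\ell+1}(\partial\Sigma)}{\lambda_{\ell+1}(\partial\Sigma)-\alpha},
\]
and moreover $\|\cdot\|_{1,\alpha}$ restricted to $\{u\in E_\ell^\perp(\partial\Sigma):\int_{\partial\Sigma}u\,ds_g=0\}$ is, by the Poincar\'e inequality, a norm equivalent to $\|\cdot\|_{W^{1,2}(\Sigma,g)}$. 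Hence $\{u_k\}$ is bounded in $W^{1,2}(\Sigma,g)$, and passing to a subsequence $u_k\rightharpoonup u_\epsilon$ weakly in $W^{1,2}(\Sigma,g)$, $u_k\to u_\epsilon$ strongly in $L^2(\partial\Sigma,g)$ and a.e.\ on $\partial\Sigma$; the constraints $\int_{\partial\Sigma}u_\epsilon\,ds_g=0$ and $\int_{\partial\Sigma}u_\epsilon e_i\,ds_g=0$ pass to the limit, and $\|u_\epsilon\|_{1,\alpha}\le 1$ by weak lower semicontinuity of the Dirichlet energy together with strong $L^2(\partial\Sigma)$-convergence, so $u_\epsilon\in\mathcal S$.

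The crux is to pass the exponential integral to the limit. Put $w_k=u_k-u_\epsilon$, so $\int_{\partial\Sigma}w_k\,ds_g=0$ and, using $\|u_k\|_{1,\alpha}=1$ together with weak and $L^2(\partial\Sigma)$-convergence, $\|\nabla_gw_k\|_2^2\to 1-\|u_\epsilon\|_{1,\alpha}^2$. If $u_\epsilon\not\equiv 0$ then mean-zero plus the norm equivalence above forces $\|u_\epsilon\|_{1,\alpha}>0$, so this limit is $<1$; a H\"older splitting $e^{(\pi-\epsilon)u_k^2}\le e^{(\pi-\epsilon)(1+\delta)w_k^2}\,e^{(\pi-\epsilon)(1+1/\delta)u_\epsilon^2}$ with $\delta>0$ small, together with inequality (\ref{1}) applied to $w_k/\|\nabla_gw_k\|_2$ (and the fact that $e^{cu_\epsilon^2}\in L^s(\partial\Sigma)$ for all $c,s$ since $u_\epsilon$ is a fixed $W^{1,2}$-function), shows $e^{(\pi-\epsilon)u_k^2}$ is bounded in $L^q(\partial\Sigma,g)$ for some $q>1$. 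If instead $u_\epsilon\equiv 0$, then $\|\nabla_gu_k\|_2^2\to 1$, and since $\pi-\epsilon<\pi$ inequality (\ref{1}) applied to $u_k/\|\nabla_gu_k\|_2$ again bounds $e^{(\pi-\epsilon)u_k^2}$ in $L^q(\partial\Sigma,g)$ for some $q>1$; then uniform integrability forces $\int_{\partial\Sigma}e^{(\pi-\epsilon)u_k^2}ds_g\to\ell(\partial\Sigma)$, which is impossible because $\mathcal S$ contains functions whose trace on $\partial\Sigma$ is not identically zero, so that the supremum strictly exceeds $\ell(\partial\Sigma)$. Hence $u_\epsilon\not\equiv 0$, and in the first case uniform integrability together with $u_k\to u_\epsilon$ a.e.\ gives $\int_{\partial\Sigma}e^{(\pi-\epsilon)u_k^2}ds_g\to\int_{\partial\Sigma}e^{(\pi-\epsilon)u_\epsilon^2}ds_g$, so $u_\epsilon$ attains the supremum; in particular $\|u_\epsilon\|_{1,\alpha}=1$ (otherwise scaling up enlarges the value) and $u_\epsilon\not\equiv 0$ on $\partial\Sigma$.

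Next I would derive (\ref{2e-u}) by the Lagrange multiplier rule for the constraint functionals $g_0(u)=\|u\|_{1,\alpha}^2-1$, $g_1(u)=\int_{\partial\Sigma}u\,ds_g$ and $g_{1+i}(u)=\int_{\partial\Sigma}ue_i\,ds_g$, $1\le i\le s_\ell$: there are constants $\theta_0,\dots,\theta_{1+s_\ell}$ with
\[
2(\pi-\epsilon)\int_{\partial\Sigma}u_\epsilon e^{(\pi-\epsilon)u_\epsilon^2}\varphi\,ds_g=2\theta_0\left(\int_\Sigma\nabla_gu_\epsilon\nabla_g\varphi\,dv_g-\alpha\int_{\partial\Sigma}u_\epsilon\varphi\,ds_g\right)+\theta_1\int_{\partial\Sigma}\varphi\,ds_g+\sum_{i=1}^{s_\ell}\theta_{1+i}\int_{\partial\Sigma}\varphi e_i\,ds_g
\]
for every $\varphi\in W^{1,2}(\Sigma,g)$. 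Taking $\varphi=u_\epsilon$ and using $g_0(u_\epsilon)=g_1(u_\epsilon)=g_{1+i}(u_\epsilon)=0$ yields $\theta_0=(\pi-\epsilon)\int_{\partial\Sigma}u_\epsilon^2e^{(\pi-\epsilon)u_\epsilon^2}ds_g=(\pi-\epsilon)\lambda_\epsilon>0$, where $u_\epsilon\not\equiv 0$ on $\partial\Sigma$ is used; taking $\varphi\equiv 1$ and then $\varphi=e_i$ and using $\int_{\partial\Sigma}e_i\,ds_g=0$, $\int_{\partial\Sigma}e_ie_j\,ds_g=\delta_{ij}$, $\Delta_ge_i=0$ and $e_i\in E_\ell(\partial\Sigma)$ (so that $\int_\Sigma\nabla_gu_\epsilon\nabla_ge_i\,dv_g=\int_{\partial\Sigma}u_\epsilon\,\partial e_i/\partial\mathbf n\,ds_g$ is a constant multiple of $\int_{\partial\Sigma}u_\epsilon e_i\,ds_g=0$) identifies $\theta_1=2(\pi-\epsilon)\mu_\epsilon$ and $\theta_{1+i}=2(\pi-\epsilon)\beta_{\epsilon,i}$ with $\mu_\epsilon,\beta_{\epsilon,i}$ exactly as in (\ref{2e-u}). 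Dividing the identity by $2\theta_0$, choosing $\varphi\in C_0^\infty(\Sigma)$ gives $\Delta_gu_\epsilon=0$ in $\Sigma$, and an integration by parts then recovers the boundary condition of (\ref{2e-u}). Finally, by (\ref{1}) the Neumann datum $u_\epsilon e^{(\pi-\epsilon)u_\epsilon^2}$ lies in $L^q(\partial\Sigma,g)$ for every $q>1$ (this is where subcriticality is essential), while $\alpha u_\epsilon$, $\mu_\epsilon/\lambda_\epsilon$ and the $e_i$ are smooth; a standard elliptic bootstrap for the harmonic function $u_\epsilon$ with this boundary data gives $u_\epsilon\in C^\infty(\overline\Sigma)$, and the maximum principle confirms $u_\epsilon\not\equiv 0$ on $\partial\Sigma$.

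I expect the only genuinely delicate points to be the weak closedness and coercivity of $\mathcal S$, which is exactly where $\alpha<\lambda_{\ell+1}(\partial\Sigma)$ enters through the Rayleigh bound (\ref{lal}), and the nondegeneracy $\theta_0\neq 0$ of the multiplier attached to the norm constraint (equivalently, $u_\epsilon\not\equiv 0$ on $\partial\Sigma$, which in turn rests on the supremum strictly exceeding $\ell(\partial\Sigma)$); both are handled just as in the proof of Lemma~\ref{L1}.
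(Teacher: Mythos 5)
Your proposal is correct and follows essentially the same route the paper relies on: the paper gives no independent proof of this lemma, deferring to Lemma \ref{L1} (itself cited to Yang's Lemmas 4.1 and 3.2 in \cite{Yang2006,Yang2007}), and your direct-method argument with the Rayleigh bound from $\alpha<\lambda_{\ell+1}(\partial\Sigma)$, the concentration/H\"older splitting via inequality (\ref{1}), and the Lagrange multiplier computation (using $\Delta_g e_i=0$ and the Steklov relation to kill the cross terms) is exactly the standard argument being invoked. The multiplier identifications $\theta_0=(\pi-\epsilon)\lambda_\epsilon$, $\theta_1=2(\pi-\epsilon)\mu_\epsilon$, $\theta_{1+i}=2(\pi-\epsilon)\beta_{\epsilon,i}$ reproduce (\ref{2e-u}) correctly.
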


We now perform the blow-up analysis. Let $c_{\epsilon}=|u_{\epsilon}( x_\epsilon )| =\max_{\overline{\Sigma}}|u_{\epsilon}|$.
With no loss of generality, we assume in the following
$c_{\epsilon}=u_{\epsilon}( x_\epsilon ) \rightarrow + \infty$ and $x_\epsilon\rightarrow p$ as $\epsilon\rightarrow0$.
Applying maximum principle to (\ref{2e-u}), we have $p \in \partial \Sigma$.
Analogous to Lemma 4, we get

\begin{lemma}\label{2LG}
There holds
$c_{\epsilon}u_{\epsilon}\rightharpoonup G$ weakly in $W^{1,q}( \Sigma, g )\ (\forall 1<q<2)$,
$c_{\epsilon}u_{\epsilon}\rightarrow G$ strongly in $L^2( \partial\Sigma, g )$ and
$c_{\epsilon}u_{\epsilon}\rightarrow G$ in $ {C_{loc}^{1}( \overline{\Sigma}\backslash \left\{ p \right\} ) }$ as $\epsilon\rightarrow0$,
where $G$ is a Green function satisfying
\begin{equation*}\label{2e-G}
\left\{
\begin{aligned}
&\Delta_g G=\delta _{p}  \,\,\,\mathrm{in}\,\,\,\overline{\Sigma},\\
&\frac{\partial G}{\partial\mathbf{n}}=\alpha G-\frac{1}{\ell(\partial \Sigma )}-\sum^{s_{\ell}}_{i=1} e_i e_i(p) \,\,\,\mathrm{on}\,\,\,\partial\Sigma\backslash\left\{p\right\},\\
&\int_{\partial\Sigma}{Gds_g}=0.
\end{aligned}
\right.
\end{equation*}
\end{lemma}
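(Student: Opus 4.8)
The plan is to transcribe the proof of Lemma~\ref{LG} line by line, the only genuinely new point being the handling of the extra boundary term $-\sum_{i=1}^{s_\ell}\lambda_\epsilon^{-1}\beta_{\epsilon,i}e_i$ appearing in the Euler--Lagrange equation (\ref{2e-u}). First I would rerun the blow-up machinery of Section~2 for the family $u_\epsilon$ supplied by Lemma~\ref{2L1}: the analogue of Lemma~\ref{L3} (so $u_\epsilon\rightharpoonup0$ weakly in $W^{1,2}(\Sigma,g)$, $u_\epsilon\to0$ in $L^2(\partial\Sigma,g)$ and $|\nabla_gu_\epsilon|^2dv_g\rightharpoonup\delta_p$), the rescaled convergences corresponding to (\ref{5})--(\ref{8}), the truncation identity (\ref{9}), the analogue of Lemma~\ref{L6}, and the concentration statement corresponding to (\ref{10}), namely $\lambda_\epsilon^{-1}c_\epsilon u_\epsilon e^{(\pi-\epsilon)u_\epsilon^2}ds_g\rightharpoonup\delta_p$ in the sense of measures. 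Each $e_i$ is a fixed function in $C^\infty(\overline\Sigma)$, and in the blown-up equations corresponding to (\ref{e-psi})--(\ref{e-phi}) the extra contribution is $-\sum_i c_\epsilon r_\epsilon\lambda_\epsilon^{-1}\beta_{\epsilon,i}\,e_i(\bar x_\epsilon+r_\epsilon x)$, which carries the vanishing factor $r_\epsilon$; hence, once the ratios $c_\epsilon\beta_{\epsilon,i}/\lambda_\epsilon$ are known to be bounded, none of these steps is affected.

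The one new computation is the identification of the Lagrange multipliers. Integrating the boundary condition in (\ref{2e-u}) over $\partial\Sigma$, using $\Delta_gu_\epsilon=0$ and $\int_{\partial\Sigma}u_\epsilon\,ds_g=\int_{\partial\Sigma}e_i\,ds_g=0$, gives $c_\epsilon\mu_\epsilon/\lambda_\epsilon\to1/\ell(\partial\Sigma)$ exactly as for (\ref{01}); and since each $e_i$ is continuous while $\lambda_\epsilon^{-1}c_\epsilon u_\epsilon e^{(\pi-\epsilon)u_\epsilon^2}ds_g\rightharpoonup\delta_p$, testing this measure against $e_i$ yields
\[
\frac{c_\epsilon\beta_{\epsilon,i}}{\lambda_\epsilon}=\int_{\partial\Sigma}e_i\,\frac{c_\epsilon u_\epsilon e^{(\pi-\epsilon)u_\epsilon^2}}{\lambda_\epsilon}\,ds_g\longrightarrow e_i(p),\qquad 1\le i\le s_\ell .
\]
In particular the ratios $c_\epsilon\mu_\epsilon/\lambda_\epsilon$ and $c_\epsilon\beta_{\epsilon,i}/\lambda_\epsilon$ are bounded in $\epsilon$, which closes the loop in the previous paragraph.

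With this in hand I would finish exactly as in Lemma~\ref{LG}. For $1<q<2$ estimate $\|\nabla_g(c_\epsilon u_\epsilon)\|_{L^q(\Sigma)}^q$ by duality against $\Phi$ with $\|\Phi\|_{W^{1,q'}}=1$ (so $\|\Phi\|_{C^0(\Sigma)}\le C$), using the divergence theorem, the concentration of $\lambda_\epsilon^{-1}c_\epsilon u_\epsilon e^{(\pi-\epsilon)u_\epsilon^2}ds_g$, the bound on $\|c_\epsilon u_\epsilon\|_{L^2(\partial\Sigma)}$, and the boundedness of the multiplier ratios together with $\|e_i\|_{C^0(\overline\Sigma)}\le C$; the Poincar\'e inequality then gives a uniform bound for $c_\epsilon u_\epsilon$ in $W^{1,q}(\Sigma,g)$, hence up to a subsequence $c_\epsilon u_\epsilon\rightharpoonup G$ weakly in $W^{1,q}(\Sigma,g)$ and strongly in $L^2(\partial\Sigma,g)$. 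Testing (\ref{2e-u}) multiplied by $c_\epsilon$ against $\Phi\in C^\infty(\overline\Sigma)$ and letting $\epsilon\to0$, the three limits above convert the boundary term into $\alpha G-1/\ell(\partial\Sigma)-\sum_{i=1}^{s_\ell}e_i\,e_i(p)$, giving the stated equation for $G$. Finally, for fixed $\delta>0$ the analogue of Lemma~\ref{L3} gives $\|\nabla_g(\eta u_\epsilon)\|_2\to0$ for a cut-off $\eta$ vanishing near $p$, so by (\ref{1}) $e^{(\pi-\epsilon)u_\epsilon^2}$ is bounded in every $L^q(\overline\Sigma\setminus B_{2\delta}(p))$; then the right-hand side of the boundary condition for $c_\epsilon u_\epsilon$ lies in $L^{q_0}(\partial\Sigma\setminus B_{2\delta}(p))$ for some $q_0>2$, and the elliptic estimate yields $C^1$ bounds on $\overline\Sigma\setminus B_{4\delta}(p)$; a diagonal argument produces $c_\epsilon u_\epsilon\to G$ in $C^1_{loc}(\overline\Sigma\setminus\{p\})$.

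The only obstacle is bookkeeping: one must verify that the term $-\sum_i\lambda_\epsilon^{-1}\beta_{\epsilon,i}e_i$ does not disturb any step imported from Section~2, and this reduces entirely to the boundedness and the limit $c_\epsilon\beta_{\epsilon,i}/\lambda_\epsilon\to e_i(p)$ obtained above; everything else is identical to the first-eigenvalue case.
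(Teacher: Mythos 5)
Your proposal is correct and follows exactly the route the paper intends (the paper gives no separate proof of Lemma~\ref{2LG}, stating only that it is ``analogous to Lemma~\ref{LG}''): the single genuinely new ingredient is the identification $c_\epsilon\beta_{\epsilon,i}/\lambda_\epsilon\to e_i(p)$, obtained by testing the concentrating measure $\lambda_\epsilon^{-1}c_\epsilon u_\epsilon e^{(\pi-\epsilon)u_\epsilon^2}ds_g\rightharpoonup\delta_p$ against the continuous functions $e_i$, which is precisely what you do and is what produces the extra term $-\sum_i e_ie_i(p)$ in the Green function's boundary condition. The only nuance worth noting is the order of the steps: for the blow-up analysis itself one needs only the crude bound on $\beta_{\epsilon,i}/\lambda_\epsilon$ (obtained exactly as for $\mu_\epsilon/\lambda_\epsilon$ from $\liminf_{\epsilon\to0}\lambda_\epsilon>0$, since $|\beta_{\epsilon,i}|\leq \|e_i\|_{\infty}\int_{\partial\Sigma}|u_\epsilon|e^{(\pi-\epsilon)u_\epsilon^2}ds_g$), so there is no circularity with the sharper limit, which enters only at the Green-function stage after the analogue of (\ref{10}) is available.
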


Moreover, $G$ near $p$ can be decomposed into
\begin{equation}\label{2G}
  G=-\frac{1}{\pi}\log r+A_{p}+O(r),
\end{equation}
where $r=dist(x,p)$ and $A_{p}$ is a constant depending on $\alpha , p$ and $(\Sigma,g)$.
Analogous to Lemma \ref{L8}, using the capacity estimate, we derive an upper bound of the supremum (\ref{sup2}):
	\begin{equation}\label{215}
	\sup _ { u \in \mathcal { S} } \int _ { \partial\Sigma } e ^ {\pi u^ 2} ds_g \leq \ell(\partial \Sigma) + 2\pi e ^ { \pi A _ { p} }.
    \end{equation}

\subsection{Existence result}
The content is carried out under the hypothesis $0 \leq \alpha < \lambda_{\ell+1}(\partial\Sigma)$.
We take an isothermal coordinate system $(U,\phi)$ near $p$ such that $\phi(p) = 0$, $\phi$ maps $U$ to $\mathbb { R } ^ { 2  }_+$, and $\phi (U\cap \partial\Sigma)\subset \partial\mathbb { R } ^ { 2  }_+$. In such coordinates, the metric $g$ has the representation $g = e^{2f} (dx_1^2 +dx_2^2 )$ and $f$ is a smooth function with $f (0) = 0$.
Set a cut-off function
$\xi \in C _ { 0 } ^ { \infty } ( B _ { 2 R \epsilon } ( p ) )$ with $\xi = 1$ on $\overline{B _ { R \epsilon } ( p)}$ and $\| \nabla_g \xi \| _ { L ^ { \infty } } = O (  1/(  R \epsilon ) )$. Denote $B^+_r=\phi^{-1}(\mathbb{B}^+_r)$ and $\beta =G+(1/\pi)\log r-A_{p}$, where $G$ is defined as in (\ref{2G}). Let $R= -\log \epsilon $, then $R\rightarrow+\infty$ and $R\epsilon\rightarrow0$ as $\epsilon\rightarrow0$.
We construct a blow-up sequence of functions
	\begin{equation*}\label{223}
    v_{\epsilon}=\left\{\begin{aligned}
	&\f({ c- \frac { 1 } { 2 \pi c} \log  \frac { \pi ^ { 2 } x_1 ^ { 2 }+ ({\pi x_2}+\epsilon)^2} { \epsilon ^ { 2 } }   + \frac{B}{c} }\ri)\circ\phi \ ,&		\,\,& x\in B^+_{ R\epsilon},\\
	&\frac{G-\xi \beta}{c},&		\,\,\ \ &x\in B^+_{2R\epsilon}\backslash B^+_{R\epsilon},\\
	&\frac{G}{c},&		\,\,&x\in \Sigma \backslash B^+_{2R\epsilon},\\
\end{aligned} \right.
	\end{equation*}
for some constants $B$, $c$ to be determined later,
such that
$$
\int _ { \Sigma } \left| \nabla _ { g } v_ { \epsilon } \right| ^ { 2 } d v _ { g } - \alpha \int _ { \partial \Sigma } (v _ { \epsilon } - \overline { v } _ { \epsilon } ) ^ { 2 } d s _ { g } = 1$$
and $v _ { \epsilon } - \overline { v } _ { \epsilon }\in \mathcal{S}$,
where $\overline { v } _ { \epsilon } =   \int _ { \partial \Sigma } v _ { \epsilon } d s _ { g }/{ \ell ( \partial \Sigma ) }.$
Similar to the subsection \ref{sub2.4},
we determine the constants
	\begin{equation*}\label{2B}
B = \frac { 1 } {  \pi }\log 2 + O\f( \frac { \log R } { R } \ri) +O\f( R\epsilon \log^2 ( R\epsilon )\ri)
	\end{equation*}
and
\begin{equation*}\label{2c2}
{c}^2=A_{p}+\frac { 1 } { \pi } \log \f(\frac{\pi}{2\epsilon} \ri)+O\f( \frac { \log R } { R } \ri) +O\f( R\epsilon \log^2 ( R\epsilon ) \ri).
\end{equation*}
Then we get
	\begin{equation}\label{227}
\int_{\partial\Sigma }{e}^{\pi ( v _ { \epsilon } - \overline { v } _ { \epsilon } ) ^ { 2 }}\,ds_g
\ge 2\pi e^{\pi A_{p}}+ \ell( \partial\Sigma ) +\frac{\pi \| G \| _ { L ^ { 2 } ( \partial \Sigma ) } ^ { 2 }}{c^2}+O \f( R \epsilon \log ^ {2} ( R \epsilon ) \ri)+O \f( \frac { \log R } { R } \ri).
	\end{equation}

Following Yang \cite{Yang-JDE-15}, we set $${v}^*_\epsilon=(v _ { \epsilon } - \overline { v } _ { \epsilon })-
\sum^{s_{\ell}}_{i=1}e_{i}\int_{\partial\Sigma}(v _ { \epsilon } - \overline { v } _ { \epsilon }) e_i\ ds_g\in E_{\ell}^{\bot},$$
which gives
\begin{equation*}
  \left\{\begin{aligned}
  	&{v}^*_\epsilon=(v _ { \epsilon } - \overline { v } _ { \epsilon })+O\f(\frac{1}{R^2}\ri),\\
  &\|{v}^*_\epsilon\|^2_{1,\alpha}=1+O\left(\frac{1}{R^2}\right),\\
	&\int _{\partial\Sigma}  {v}^*_\epsilon\,ds_g = 0.\\
\end{aligned} \right.
	\end{equation*}
It is easy to verify $\tilde{v}_\epsilon={v}^*_\epsilon/\|{v}^*_\epsilon\|^2_{1,\alpha}\in \mathcal{S}$.
According to this and (\ref{227}), we have
	\begin{equation}\label{225}
    \begin{aligned}
\int_{\partial\Sigma }{e}^{\pi  \tilde{v} _ { \epsilon }^2 }\,ds_g&\geq
\f(1+O\f(\frac{1}{R^2}\ri)\ri)\int_{\partial\Sigma }{e}^{\pi ( v _ { \epsilon } - \overline { v } _ { \epsilon } ) ^ { 2 }}\,ds_g\\
&\geq 2\pi e^{\pi A_{p}}+ \ell( \partial \Sigma ) +\frac{\pi \| G \| _ { L ^ { 2 } ( \partial \Sigma ) } ^ { 2 }}{c^2}+O \f( R \epsilon \log ^ {2} ( R \epsilon ) \ri)+O \f( \frac { \log R } { R } \ri)\\
&>2\pi e^{\pi A_{p}}+\ell(\partial \Sigma )
    \end{aligned}	
    \end{equation}
for sufficiently small $\epsilon$.
The contradiction between (\ref{215}) and (\ref{225}) indicates that $c_{\epsilon}$ must be bounded, which together with elliptic estimates completes the proof of Theorem \ref{2T1}.

$\hfill\Box$

\section*{References}


\begin{thebibliography}{00}
\bibitem{AD}
Adimurthi, O. Druet, Blow-up analysis in dimension 2 and a sharp form of Trudinger-Moser inequality, Comm.
Partial Differential Equations 29 (2004) 295-322.

\bibitem{A-Struwe}
Adimurthi, M. Struwe, Global compactness properties of semilinear elliptic equation with critical exponential growth, J. Funct. Anal. 175 (2000) 125-167.

\bibitem{A}
T. Aubin, Sur la function exponentielle, C. R. Acad. Sci. Paris S\'er. A-B 270 (1970) A1514-A1516.

\bibitem{Brezis}
H. Brezis, Functional Analysis, Sobolev Spaces and PDEs, Springer, 2011.


\bibitem{C-C}
L. Carleson, S. Chang, On the existence of an extremal function for an inequality of J. Moser, Bull. Sci. Math. 110 (1986) 113-127.

\bibitem{C}
P. Cherrier, Une in\'egalit\'e de Sobolev sur les vari\'et\'es riemanniennes, Bull. Sci. Math. 103 (1979) 353-374.

\bibitem{do-de2014}
M. de Souza, J. do \'O, A sharp Trudinger-Moser type inequality in $\Bbb{R}^2$, Trans. Amer. Math. Soc. 366 (2014) 4513-4549.

\bibitem{DJLW}
W. Ding, J. Jost, J. Li, G. Wang, The differential equation $\Delta u = 8\pi-8\pi he^u$ on a compact Riemann Surface, Asian J. Math. 1 (1997) 230-248.

\bibitem{do-de2016}
J. do \'O, M. de Souza, Trudinger-Moser inequality on the whole plane and extremal functions, Commun. Contemp. Math. 18 (2016) 1550054 32 pp.

\bibitem{F-Z}
Y. Fang, M. Zhang, On a class of Kazdan-Warner equations, Turkish J. Math. 42 (2018) 2400-2416.

\bibitem{Flucher}
M. Flucher, Extremal functions for the trudinger-moser inequality in $2$ dimensions, Comment. Math. Helv. 67 (1992) 471-497.

\bibitem{Fontana}
L. Fontana, Sharp borderline Sobolev inequalities on compact Riemannian manifolds, Comment. Math. Helv. 68 (1993) 415-454.

\bibitem{Lixiaomeng}
X. Li, An improved singular Trudinger-Moser inequality in $\mathbb{R}^N$ and its extremal functions, J. Math. Anal. Appl. 462 (2018) 1109-1129.

\bibitem{L-Y}
X. Li, Y. Yang, Extremal functions for singular Trudinger-Moser inequalities in the entire Euclidean space, J. Differential Equations 264 (2018) 4901-4943.

\bibitem{Li-JPDE}
Y. Li, Moser-Trudinger inequality on compact Riemannian manifolds of dimension two, J. Partial Differential Equations 14 (2001) 163-192.

\bibitem{Li-Sci}
Y. Li, Extremal functions for the Moser-Trudinger inequalities on compact Riemannian manifolds, Sci. China Ser. A 48 (2005) 618-648.

\bibitem{Li-Liu}
Y. Li, P. Liu, Moser-Trudinger inequality on the boundary of compact Riemannian surface, Math. Z. 250 (2005) 363-386.

\bibitem{Li-Zhu}
Y. Li, M. Zhu, Uniqueness theorems through the method of moving spheres, Duke Math. J. 80 (1995) 383-417.

\bibitem{Lin}
K. Lin, Extremal functions for Moser's inequality, Trans. Amer. Math. Soc. 348 (1996) 2663-2671.

\bibitem{Liu}
P. Liu, A Moser-Trudinger type inequality and blow-up analysis on compact Riemannian surface, Doctoral thesis,
Max-Plank Institute, Germany, (2005).

\bibitem{Moser1970}
J. Moser, A sharp form of an inequality by N. Trudinger, Indiana Univ. Math. J. 20 (1970/71) 1077-1092.

\bibitem{N2017}
V. Nguyen, Improved Moser-Trudinger inequality for functions with mean value zero in $\mathbb{R}^n$ and its extremal functions, Nonlinear Anal. 163 (2017) 127-145.

\bibitem{N2018}
V. Nguyen, Improved Moser-Trudinger type inequalities in the hyperbolic space $\Bbb{H}^n$, Nonlinear Anal. 168 (2018) 67-80.


\bibitem{OPS}
B. Osgood, R. Phillips, P. Sarnak, Extremals of determinants of Laplacians, J. Funct. Anal. 80 (1988) 148-211.

\bibitem{Peetre}
J. Peetre, Espaces d'interpolation et $\mathrm{th\acute{e}or\grave{e}me}$ de Soboleff, Ann. Inst. Fourier 16 (1966) 279-317.		

\bibitem{Pohozaev}
S. Poho$\mathrm{\check{z}}$aev, The Sobolev embedding in the special case $p\ell=n$, Proceedings of the technical scientific conference on advances of scientific reseach 1964-1965, Math. sections, Moscov. Energet. Inst. (1965) 158-170.

\bibitem{Tintarev}
C. Tintarev, Trudinger–Moser inequality with remainder terms, J. Funct. Anal. 266 (2014) 55-66.

\bibitem{Trudinger1967}
N. Trudinger, On embeddings into Orlicz spaces and some applications, J. Math. Mech. 17 (1967) 473-484.

\bibitem{Yang-JFA-06}
Y. Yang, A sharp form of Moser-Trudinger inequality in high dimension, J. Funct. Anal. 239 (2006) 100-126.

\bibitem{Yang-IJM}
Y. Yang, Extremal functions for Moser-Trudinger inequalities on 2-dimensional compact Riemannian manifolds with boundary, Internat. J. Math. 17 (2006) 313-330.

\bibitem{Yang2006}
Y. Yang, Moser-Trudinger trace inequalities on a compact Riemannian surface with boundary, Pacific J. Math. 227 (2006) 177-200.

\bibitem{Yang2007}
Y. Yang, A sharp form of trace Moser-Trudinger inequality on compact Riemannian surface with boundary, Math. Z. 255 (2007) 373-392.


\bibitem{Yang-JDE-15}
Y. Yang, Extremal functions for Trudinger-Moser inequalities of Adimurthi-Druet type in dimension two, J. Differential Equations 258 (2015) 3161-3193.

\bibitem{Yang-Zhu2018}
Y. Yang, X. Zhu, Existence of solutions to a class of Kazdan-Warner equations on compact Riemannian surface, Sci. China Math. 61 (2018) 1109-1128.

\bibitem{Yang-Zhu2019}
Y. Yang, X. Zhu, A Trudinger-Moser inequality for a conical metric in the unit ball, Arch. Math. 112 (2019) 531-545.

\bibitem{Yudovich}
V. Yudovich, Some estimates connected with integral operators and with solutions of elliptic equations, Sov. Math. Docl. 2 (1961) 746-749.

\bibitem{ZhuJY}
J. Zhu, Improved Moser-Trudinger inequality involving $L^p$ norm in $n$ dimensions, Adv. Nonlinear Stud. 14 (2014), 273-293.

\bibitem{Zhu}
X. Zhu, A generalized Trudinger-Moser inequality on a compact Riemannian surface with conical singularities, Sci. China Math. 62 (2019) 699-718.



\end{thebibliography}
\end{document}